 \newlength{\baseunit}               % the basic unit length
\newtheorem{thmintro}{Theorem}
\newtheorem{tm}{Theorem}[section]
\newtheorem{pr}[tm]{Proposition}
\newtheorem{lm}[tm]{Lemma}
\newtheorem{co}[tm]{Corollary}
\newtheorem{exa}[tm]{Example}
\newcommand{\cat}[1]{{\mathbf{#1}}}
\newcommand{\ho}{\operatorname{ho}}
\newcommand{\pro}{\operatorname{pro}}
\newcommand{\Aone}{{\mathbb{A}^{\!1}}}
\newcommand{\proj}{\mathbb{P}}
\DeclareMathOperator*{\colim}{colim}
\newcommand{\rH}{\operatorname{H}}
\newcommand{\Z}{\mathbb{Z}}
\newcommand{\Zhat}{\hat{\mathbb{Z}}}
\newcommand{\Q}{\mathbb{Q}}
\newcommand{\kbar}{\overline{k}}
\newcommand{\C}{\mathbb{C}}
\newcommand{\G}{\mathbb{G}}
\newcommand{\hatZ}{\mathbb{Z}^{\wedge}}
\newcommand{\Hom}{\operatorname{Hom}} % Abelian group of homomorphisms
\newcommand{\Gal}{\operatorname{Gal}}
\newcommand{\Grp}{\cat{Grp}}
\newcommand{\Ab}{\cat{Ab}}
\newcommand{\prosSet}{\cat{pro-sSet}}
\newcommand{\sSet}{\cat{sSet}}
\newcommand{\Sm}{\cat{Sm}} 
\newcommand{\spaces}[1]{\cat{sPre}(\Sm_{#1})} 
\newcommand{\Spaces}{\cat{sPre}(\Sm_k)} % added TBJW 5 March, labor-saving
\newcommand{\pGout}{\pro-\Grp^{\out}_{\pro-G^{\wedge}}}
\newcommand{\ott}{\overline{\theta}_{3}}
\newcommand{\Et}{\operatorname{Et}}
\newcommand{\LEt}{\operatorname{L Et}}
\newcommand{\Th}{\operatorname{Th}}
\newcommand{\Spec}{\operatorname{Spec}}
\newcommand{\Ker}{\operatorname{Ker}}
\newcommand{\out}{\mathrm{out}}
\newcommand{\et}{{\text{\'et}}}
\newcommand{\ab}{{\rm ab}}
\newcommand{\calN}{{ \mathcal N}}
\newcommand{\hidden}[1]{\footnote{Hidden:  #1}}
\renewcommand{\hidden}[1]{}
\begin{document}
\pagestyle{plain}
\title{Desuspensions of $S^1 \wedge (\proj_{\Q}^1 - \{0,1,\infty \})$}

\author{Kirsten Wickelgren}
\address{School of Mathematics, Georgia Institute of Technology, Atlanta~GA}
\date{February 2015.}
\subjclass[2010]{Primary 55P40 Secondary 14H30. }
\begin{abstract}
We use the Galois action on $\pi_1^{\et}(\proj_{\overline{Q}}^1 - \{0,1,\infty \})$ to show that the homotopy equivalence $S^1 \wedge (\G_{m,\Q} \vee \G_{m,\Q}) \cong S^1 \wedge (\proj_{\Q}^1 - \{0,1,\infty \}) $ coming from purity does not desuspend to a map $\G_{m,\Q} \vee \G_{m,\Q} \to \proj_{\Q}^1 - \{0,1,\infty \}$.

\end{abstract}
\maketitle
%\tableofcontents

{\parskip=12pt % closing bracket is just before the bibliography 

\section{Introduction} 

The \'etale fundamental group of the scheme $\proj^1 - \{0,1,\infty\}$ contains interesting arithmetic \cite{Deligne_Galois_groups} \cite{Ihara_braids}. By viewing schemes as objects in the $\Aone$-homotopy category of Morel-Voevodsky \cite{MV}, we may form the simplicial suspension $\Sigma X =S^1 \wedge X$ of a pointed scheme $X$, and the wedge product of two pointed schemes. After one simplicial suspension, $\proj^1 - \{0,1,\infty\}$ and the wedge $\G_m \vee \G_m$ of two copies of $\G_m$ become canonically $\Aone$-equivalent by the purity theorem \cite[Theorem 2.23]{MV}, and this is given in Proposition \ref{canonical_iso_SigmaP1minus3points}. This paper uses calculations of Anderson, Coleman, Ihara and collaborators \cite{Anderson_hyperadelic_gamma} \cite{Coleman_Gauss_sum} \cite{Ihara_Braids_Gal_grps} \cite{IKY}  on the \'etale fundamental group of $\proj^1 - \{0,1,\infty\}$ to show that this $\Aone$-equivalence does not desuspend, in the sense that there does not exist a map $\G_{m,\Q} \vee \G_{m,\Q} \to \proj^1_{\Q} - \{0,1,\infty \}$ whose suspension is equivalent to the map coming from purity. This can be summarized by the statement that the Galois action on the \'etale fundamental group of $\proj^1 - \{0,1,\infty\}$ is an obstruction to desuspension. 

There are topological obstructions to desuspension coming from James-Hopf maps, and they generalize to the setting of $\Aone$-homotopy theory \cite{SEHPWickelgrenWilliams} \cite{AFWW_SimpSuspSeq} as was known to Morel. They do not a priori have a relationship with the Galois action on $\pi_1^{\et}(\proj^1_{\overline{\Q}} - \{0,1,\infty\})$. This paper is motivated by the contrast between the systematic tools from algebraic topology available to obstruct desuspension and the arithmetic of $\pi_1^{\et}(\proj^1_{\Q} - \{0,1,\infty\})$ which shows that such a desuspension does not exist.

The results of this paper are as follows. Let $\Sm_k$ denote the full subcategory of finite type schemes over a characteristic $0$ field $k$ whose objects are smooth schemes. Let $\Spaces$ denote presheaves of simplicial sets on $\Sm_k$. $\Spaces$ has the structure of a simplicial model category in several ways.  Let $\ho_{\Aone} \Spaces$ denote the homotopy category of the $\Aone$-local, projective \'etale (respectively Nisnevich) model structure on $\Spaces$. The results of this paper hold with either the Nisnevich or \'etale Grothendieck topology, and we will use the same notation for either.\hidden{ Once the choice of the \'etale of Nisnevich topology is made, there are many equivalent ways to describe $\ho_{\Aone} \Spaces$. For example, $\ho_{\Aone} \Spaces$ is equivalent to the homotopy category of the $\Aone$-local, injective (or flasque) \'etale  (respectively Nisnevich) model structure on $\Spaces$.} $\ho_{\Aone} \Spaces$ is formed by formally inverting $\Aone$-weak equivalences and local \'etale (respectively Nisnevich) equivalences. In particular, in the injective Nisnevich case, $\ho_{\Aone} \Spaces$ is the $\Aone$-homotopy category of \cite{MV}. 

In Section \ref{stab_iso_section}, we give the canonical isomorphism in $\ho_{\Aone} \Spaces$ discussed above between the unreduced simplicial suspension of $\proj_k^1 - \{0,1,\infty \}$ and the simplicial suspension $\Sigma(\G_{m} \vee \G_{m}) = S^1 \wedge (\G_{m} \vee \G_{m})$. The reduced and unreduced simplicial suspensions are canonically isomorphic in $\ho_{\Aone} \Spaces$, although to form the reduced simplicial suspension, a base point is required. Let $\wp:  \Sigma (\G_{m} \vee G_{m}) \to \Sigma(\proj_k^1 - \{0,1,\infty \}) $ denote any of the maps in $\ho_{\Aone} \Spaces$ resulting from choosing a base point in $\proj_k^1 - \{0,1,\infty \}$. 

\begin{thmintro}\label{Intro_main_thm}
Let $k$ be a finite extension of $\Q$ not containing a square root of $2$. There is no morphism $\G_{m,k} \vee \G_{m,k} \to \proj_{k}^1 - \{0,1,\infty \}$ in $\ho_{\Aone} \Spaces$ whose simplicial suspension is $\wp$.
\end{thmintro}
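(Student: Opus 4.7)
The plan is to suppose, for contradiction, that such a map $f: \G_{m,k} \vee \G_{m,k} \to \proj^1_k - \{0,1,\infty\}$ exists in $\ho_{\Aone}\Spaces$, apply \'etale realization, and obtain a contradiction by comparing the two resulting Galois actions on profinite fundamental groups. First I would apply the pointed \'etale realization functor to $f$ and base change to $\overline{k}$, producing a map $\hat f$ of profinite pointed spaces equipped with a continuous $\Gal(\overline{k}/k)$-action. After a basepoint choice (for instance a tangential basepoint at $\infty$), both source and target become profinite $K(\hat F_2,1)$'s carrying very different Galois actions: on $\pi_1^{\et}(\G_{m,\overline{k}} \vee \G_{m,\overline{k}}) \cong \hat{\Z}(1) \ast \hat{\Z}(1)$ the action preserves the free-product decomposition and is cyclotomic on each factor, whereas on $\pi_1^{\et}(\proj^1_{\overline{k}} - \{0,1,\infty\}) \cong \hat F_2$ it is the well-studied action of Deligne and Ihara, in which the generator around $1$ is conjugated by a nontrivial commutator-valued cocycle.

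Second, I would show that $\hat f$ induces a Galois-equivariant isomorphism of fundamental groups. Since $\Sigma f = \wp$ is an $\Aone$-weak equivalence, its \'etale realization is a profinite weak equivalence after base change, and the suspension isomorphism $\tilde H_2(\Sigma Z) \cong \tilde H_1(Z)$ forces $\hat f$ to induce an isomorphism on $\pi_1^{\ab}\cong \hat{\Z}(1)^{\oplus 2}$. A Nakayama-type argument then upgrades surjectivity modulo the Frattini subgroup to surjectivity of $\hat f_*$, and since finitely generated profinite groups are Hopfian, $\hat f_*$ is a Galois-equivariant isomorphism of the two profinite free groups on two generators.

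Third, to distinguish the two Galois actions, I would invoke the calculations of Anderson, Coleman, Ihara, and Ihara-Kaneko-Yukinari to produce a cohomological invariant $\overline\theta_3$, a mod-$2$ refinement of the third Soul\'e character, governing the Galois action on the third step of the lower central series of $\pi_1^{\et}$. This invariant vanishes for $\G_{m}\vee \G_{m}$ because the Galois action respects the free-product decomposition and so acts diagonally on iterated brackets, whereas for $\proj^1 - \{0,1,\infty\}$ it evaluates on $\sigma \in \Gal(\overline{k}/k)$ to the Kummer cocycle $\kappa(2)(\sigma) = \sigma(\sqrt{2})/\sqrt{2}$ valued in $\Z/2$. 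The hypothesis $\sqrt{2}\notin k$ is precisely the condition that $\kappa(2)$ is nontrivial on $\Gal(\overline{k}/k)$. A Galois-equivariant isomorphism of the two fundamental groups would pull $\overline\theta_3$ back to the vanishing invariant on the wedge side, a contradiction.

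The main obstacle is this last step: rigorously interpreting $\overline\theta_3$ as an invariant of the Galois-equivariant isomorphism class of a profinite group with continuous Galois action, and extracting its identification with the Kummer class of $2$ from the arithmetic literature. The homotopical and realization steps are essentially formal, so the arithmetic heart of the argument lies in converting ``$2$ is not a square in $k$'' into the obstruction $\overline\theta_3$ to the existence of $f$.
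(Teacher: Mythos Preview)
Your plan is the paper's: realize \'etale, pass to $\pi_1$ as a profinite group over $G$, and exploit the Anderson--Coleman--Ihara identification of the depth-$3$ Galois cocycle with the Kummer class of $2$. You have correctly located the hard step; here are the two specific issues the paper resolves that your sketch does not yet address.

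First, the equivariance you obtain is only \emph{outer}. Since $f$ is an unpointed map in $\ho_{\Aone}\Spaces$, the induced $\rho_*: \pi' \rtimes G \to \pi \rtimes G$ is defined only up to conjugation in the target. Your Hopfian/abelianization argument gives $(\rho_{\kbar})_*^{\ab} = \iota^{\ab}$ on the geometric part, but on $1 \rtimes G$ the map could still be twisted by cocycles $\alpha,\beta: G \to \hatZ(1)$. The paper kills these by comparing the level-$2$ extension classes $\varphi_2,\varphi_2'$ (both equal to $b \cup a$): equality of $(b+\beta)\cup(a+\alpha)$ with $b\cup a$, together with nondegeneracy of the cup product, forces $\alpha=\beta=0$ in cohomology, so after an inner automorphism one has $\rho_*^{\ab} = \iota^{\ab}\rtimes 1_G$.

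Second, and this is the heart of your ``main obstacle'': even after that normalization, the lift to $\pi'/[\pi']_3 \rtimes G \to \pi/[\pi]_3 \rtimes G$ is still only determined up to a cocycle $c: G \to \hatZ(2) \cong [\pi]_2/[\pi]_3$, namely $1\rtimes g \mapsto [x,y]^{c(g)}\rtimes g$. So the depth-$3$ datum is not a priori an invariant of the equivariant isomorphism class, contrary to what your third step asserts. The paper handles this by comparing the level-$3$ extension classes along \emph{both} basis elements of $[\pi]_3/[\pi]_4 \cong \hatZ(3)^{\oplus 2}$. The $[[x,y],x]$-component yields $(b+\tfrac{\chi-1}{2})\cup c = 0$; pulling back along $g\mapsto y^{\kappa(\beta)}\rtimes g$ for arbitrary $\beta\in k^*$ and using nondegeneracy of the mod-$2$ cup product forces $\overline c = 0$ in $\rH^1(G,\Z/2)$. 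Only then does the $[[x,y],y]$-component collapse to $\overline f \cup \overline a = 0$, where $f(g)=\tfrac{1}{24}(\chi(g)^2-1)$ and $\overline f = \kappa(2)$. So the Kummer class of $2$ does carry the obstruction, but not as a standalone invariant one reads off directly: one component of $[\pi]_3/[\pi]_4$ is needed to eliminate the indeterminacy $c$, and the other to see $\kappa(2)$.
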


Theorem \ref{Intro_main_thm} is proved as Theorem \ref{no_desuspension}. The proof uses an \'etale realization, following ideas of Artin-Mazur, Friedlander, Isaksen, Quick, and Schmidt. The needed results are collected in Section \ref{Sec_Equivariant_etale_homotopy_type}.

\subsection{Acknowledgements} I wish to thank Michael Hopkins and Ben Williams for useful discussions, and also Emily Riehl. This work is supported by NSF grant DMS-1406380.

\section{Equivariant \'Etale homotopy type}\label{Sec_Equivariant_etale_homotopy_type}

 Let $\kbar$ be an algebraic closure of $k$, and $G=\Gal(\kbar/k)$ denote the absolute Galois group of $k$. Let $\hatZ$ denote the profinite completion of $\Z$. Let $\chi: G \to (\hatZ)^{\ast}$ denote the cyclotomic character. Let $\hatZ(n)$ denote $\hatZ$ with the continuous $G$-action where $g$ in $G$ acts by multiplication by $\chi^n(g)$. For a pro-group $J =\{J_{\alpha}\}_{\alpha \in A}$, let $B P$ denote the pro-simplicial set $\{B J_{\alpha}\}_{\alpha \in A}$ given as the inverse system of the classifying spaces $B J_{\alpha}$ of the groups $J_{\alpha}$. For a group $J$, let $\pro-J^{\wedge}$ denote the pro-group given as the inverse system of the finite quotients of $J$. 

Let $\Et: \Spaces \to \prosSet$ denote the \'etale homotopy type of \cite[Definition 1]{Isaksen}. This \'etale homotopy type is built using the \'etale homotopy and topological types of \cite{ArtinMazur} and \cite{Friedlander}. An alternate extension of the \'etale homotopy type to the $\Aone$-homotopy category of schemes was constructed independently by Alexander Schmidt \cite{Schmidt_ES} \cite{Schmidt_AG}. By \cite[Theorem 2]{Isaksen}, $\Et$ is a left Quillen functor with respect to the \'etale or Nisnevich local projective model structure on $\Spaces$ and the model structure on $\prosSet$ given in \cite{Isaksen_model_protop}. This model structure on $\prosSet$ is such that weak equivalences are the maps $f:X \to Y$ inducing an isomorphism of pro-sets $\pi_0(X) \to \pi_0(Y)$ and inducing an isomorphism from the local systems associated to $\pi_n (X)$ to the pull-back of the local system on $Y$ coming from $\pi_n(Y)$. The cofibrations are maps isomorphic to levelwise cofibrations. See \cite[Definition 6.1, 6.2]{Isaksen}. The proof that $\Et$ is a left Quillen functor uses \cite[Prop 2.3]{DuggerUHT}. Let $\LEt: \ho \Spaces \to \ho \prosSet$ denote the corresponding homotopy invariant derived functor. By \cite[Corollary 4]{Isaksen}, the functor $\LEt$ agrees with the \'etale homotopy type for a scheme in $\Sm_k$. The standard calculation of the \'etale homotopy type of $\Spec k$ then gives $\LEt(\Spec k)\cong B (\pro-G^{\wedge})$.

\subsection{Fundamental groupoids} Let $X$ be a scheme over $k$. Let $\Pi_1^{\et} X$ denote the \'etale fundamental groupoid of $X$ whose objects are geometric points and morphisms are \'etale paths, i.e., natural transformations between the associated fiber functors \cite[V 7]{sga1}. The morphisms are topologized with the natural profinite topology.

For a simplicial set $X$, let $\Pi_1$ denote the fundamental groupoid. For a pro-simplicial set $X =\{ X_{\alpha} \}_{\alpha \in A}$, let $\Pi_1 X$ denote the pro-groupoid $\{ \Pi_1 X_{\alpha} \}_{\alpha \in A}$. 

\subsection{Base points and fundamental groups}
Given a map $\ast \to X = \{ X_{\alpha} \}_{\alpha \in A}$ in $\prosSet$, each simplicial set $X_{\alpha}$ has a base-point coming from the definition of morphisms $$\Hom(\ast, X) = \varprojlim_{\alpha \in A} \varinjlim \Hom(\ast, X_{\alpha}) \cong \varprojlim_{\alpha \in A}  \Hom(\ast, X_{\alpha})$$ in the pro-category. Thus there is a distinguished object in each $\Pi_1 X_{\alpha}$. The endomorphisms of this object fit into a pro-group, defined to be the fundamental group $\pi_1(X)$ of $\ast \to X$. \hidden{Alternatively, the  fundamental group $\pi_1(X)$ is defined to be the pro-group $ \{ \pi_1(X_{\alpha},\ast) \}_{\alpha \in A}$. }

 For $X$ in $\Spaces$, use the notation $\Pi_1(X)$ for $\Pi_1 \LEt X$. The \'etale homotopy type $\Et$ takes a scheme $X$ equipped with a geometric point $\Spec \Omega \to X$  to a pointed pro-simplicial set because $\Et (\Spec \Omega) \cong \ast$ for $\Omega$ an algebraically closed field. The resulting $\pi_1$ is independent of the choice of isomorphism $\Et (\Spec \Omega) \cong \ast$. 

Let $\overline{x}:\Spec \Omega \to X$ be a geometric point. Replace $\Omega$ by the subfield of $\Omega$ given by the algebraic closure of the residue field of the point of $X$ in the image of $\overline{x}$. This replacement has finite transcendence degree, and therefore $\Spec \Omega$  is an essentially smooth $k$-scheme in the sense of \cite[vi]{morel2012}, i.e. a Noetherian scheme which is an inverse limit of a left filtering system $\{ \Omega_{\alpha}\}_{\alpha \in A}$ of smooth $k$-schemes with \'etale affine transition morphisms. Since $X$ is assumed to be finite type over $k$, the map $\overline{x}$ is determined by the images of finitely many functions on an open subset of $X$, and thus determines an element of $\varinjlim X(\Omega_{\alpha})$. As in \cite{morel2012}, given $X \in \Spaces$ and an essentially smooth $k$-scheme $\{Y_{\alpha} \}_{\alpha \in A}$, define $X(Y) = \varinjlim X(Y_{\alpha})$, and call $X(Y)$ the set of $Y$ points of $X$. For $X$ in $\Spaces$, a {\em geometric point} of $X$ indicates an element of $X(\Spec \Omega)$, where $\Omega$ is an algebraically closed field of finite transcendence degree over $k$. Note that for $X \in \Spaces$, a geometric point $\overline{x} \in X(\Spec \Omega)$ induces a map $\Et (\Spec \Omega) \to \LEt X$.

Let $k-\Sm^+$ denote the following category. The objects of $k-\Sm^+$ are pairs $(X, \overline{x})$, where $X$ is a smooth $k$-scheme and $\overline{x}$ is a geometric point of $X$ equipped with a path between its image under $X \to \Spec k$ and the geometric point $\Spec \kbar \to \Spec k$ of $k$. The morphisms $(X, \overline{x}) \to (Y, \overline{y})$ of $k-\Sm^+$ are the morphisms $X \to Y$ in $k-\Sm$. There is no requirement that $\overline{x}$ is taken to $\overline{y}$. Let $k-\Sm^+_c$ denote the full subcategory of $k-\Sm^+$ of objects such that $X$ is connected. Let $\Grp_{G}^{\out}$ denote the category of topological groups over $G$ and outer homomorphisms, i.e., the objects of $\Grp_{G}^{\out}$ are morphisms $\pi \to G $ and the morphisms from $\pi \to G $ to $\pi' \to G$ is the set of equivalence classes of morphisms $\pi \to \pi'$ such that the two morphisms $\pi \to G$ coming from the diagram $$\xymatrix{\pi \ar[rd] \ar[rr]&& \pi' \ar[dl]\\ &G&} $$ differ by an inner automorphism of $G$ and where two such morphims $f,f':\pi \to \pi'$ are considered equivalent if there exists $\gamma \pi'$ such that $f'(x) = \gamma f(x) \gamma^{-1}$ for all $x$ in $\pi$. 

Given a morphism $\Pi \to \Pi'$ of pro-groupoids and commutative diagram $$\xymatrix{ \Pi \ar[r] & \Pi' \\ \ast \ar[r] \ar[u]& \ast \ar[u]},$$ there is an associated morphism of fundamental pro-groups $\pi \to \pi'$, where $\pi$ is the endomorphims in $\Pi$ of the distinguished object and similarly for $\pi'$. Given two maps $x_1,x_2: \ast \to \Pi'$ and a choice of morphism from $x_1$ to $x_2$ we obtain an isomorphism between the fundamental group based at $x_1$ and the fundamental group based at $x_2$. A different choice of path changes the isomorphism by an inner isomorphism. To a map between objects of  $k-\Sm^+_c$, we may therefore associate an outer homomorphism. We claim that this defines a functor $\pi_1^{\et}: k-\Sm^+_c \to \Grp_{G}^{\out}$. To see this, note that for the object $(X, \overline{x})$ of $k-\Sm^+_c$, the path between the image of $\overline{x}$ under $X \to \Spec k$ and the geometric point $\Spec \kbar \to \Spec k$ produces a morphism $\pi_1^{\et}(X, \overline{x}) \to G$. Given $(X, \overline{x}) \to (Y, \overline{y})$, the induced outer homomorphism $\pi_1^{\et}(X, \overline{x}) \to \pi_1^{\et}(Y, \overline{y})$ respects the maps to $G$ up to inner automorphism because $X \to Y$ respects the maps to $\Spec k$. This shows that $\pi_1^{\et}$ determines the claimed functor.

We need a fundamental group on pointed objects of $\Spaces$ with similar functoriality properties, so we introduce notation in this context analogous to the above. Let $\Spaces^+$ denote the category whose objects are pairs $(X, \overline{x})$, where $X$ is in $\Spaces$ and $\overline{x}$ is a geometric point of $X$ whose image in the set of geometric points of $\Spec k$ has a chosen path to $\Spec \kbar \to \Spec k$, and whose morphisms $(X, \overline{x}) \to (Y, \overline{y})$ are the morphisms $X \to Y$ in $\Spaces$. There is again no requirement that $\overline{x}$ is taken to $\overline{y}$. Define $\ho_{\Aone} \Spaces^+$ similarly, i.e., the morphisms $(X, \overline{x}) \to (Y, \overline{y})$ are the morphisms $X \to Y$ in $\ho_{\Aone}\Spaces$. Let $\Spaces^+_c$ denote the full-subcategory of $\Spaces^+$ on objects such that $\LEt X$ is connected. Similarly define $\ho_{\Aone} \Spaces^+_c$ to be the full-subcategory of $\ho_{\Aone} \Spaces^+$ on objects such that $\LEt X$ is connected. Let $\pGout$ denote the category of pro-groups over $\pro-G^{\wedge}$ and outer homomorphisms.

For $(X, \overline{x})$ in $\Spaces^+$, define $\pi_1(X, \overline{x}: \Spec \Omega \to X)$ to be $\pi_1$ of the pointed pro-simplicial set $\ast \cong \Et (\Spec \Omega) \to \LEt X$. By the same argument as above, $\pi_1$ defines a functor $\pi_1: \Spaces^+_c \to \pGout$.

\subsection{Homotopy invariant functors} %The inverse limit defines a functor $\varprojlim: \pGout \to \Grp_{G}^{\out}$.

\begin{pr}\label{etpi_on_spaces}
The functor $\pi_1: \Spaces^+_c \to \pGout$ factors through $\ho_{\Aone} \Spaces^+_c$. Furthermore, the diagram $$\xymatrix{ k-\Sm^+\ar[d] \ar[rr]^{\pi_1^{\et}}&&\Grp_{G}^{\out} \\ \ho_{\Aone} \Spaces_c^+ \ar[rr]^{\pi_1}&&\pGout \ar[u]^{\varprojlim} }$$ commutes up to isomorphism. 
\end{pr}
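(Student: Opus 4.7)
The plan is to prove the two claims in succession: first that $\pi_1$ factors through $\ho_{\Aone}\Spaces_c^+$, which is essentially a model-categorical check, and then that the diagram commutes, which boils down to Artin--Mazur's comparison theorem together with naturality in the structure map $X \to \Spec k$.

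For the factorization I would proceed in two stages. By Theorem~2 of \cite{Isaksen}, $\Et$ is left Quillen for the local model structure, so $\LEt$ is already defined on $\ho\Spaces$; it thus suffices to check that $\LEt$ inverts the $\Aone$-weak equivalences. Since the $\Aone$-local model structure is a left Bousfield localization, this reduces to checking that the projections $X \times \Aone \to X$ are sent to weak equivalences in Isaksen's model structure on $\prosSet$. Combining Corollary~4 of \cite{Isaksen} (which identifies $\LEt$ on smooth schemes with the classical \'etale type of \cite{ArtinMazur}, \cite{Friedlander}) with $\Aone$-invariance of \'etale cohomology with finite locally constant coefficients in characteristic zero, together with the description of weak equivalences in Isaksen's structure via local systems for $\pi_n$, supplies this. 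The second stage is to verify that the assignment $\pi_1$ on pointed pro-simplicial sets, valued in $\pGout$, sends Isaksen weak equivalences (compatible with chosen basepoints) to isomorphisms in $\pGout$; this is immediate from the definition, since such a map induces an isomorphism of fundamental local systems, hence an isomorphism of the endomorphism pro-group at the basepoint, with the reference map to $\pro-G^{\wedge}$ preserved up to inner automorphism by functoriality of $\LEt$ in $X \to \Spec k$.

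For commutativity, I would take $(X, \overline x) \in k-\Sm^+_c$ and track both composites. The right-then-down route produces Grothendieck's $\pi_1^{\et}(X, \overline x)$ together with its continuous map to $G$. The down-then-right route produces $\varprojlim \pi_1(\LEt X)$ with its induced map to $\varprojlim \pi_1(\LEt \Spec k) = \varprojlim(\pro-G^{\wedge}) = G$. Using Corollary~4 of \cite{Isaksen} to identify $\LEt X$ with the Artin--Mazur \'etale topological type, the Artin--Mazur comparison identifies $\pi_1(\LEt X)$, as a pro-group, with the inverse system of finite continuous quotients of $\pi_1^{\et}(X, \overline x)$; since the latter is already profinite, its inverse limit is itself, yielding the desired isomorphism of topological groups. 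The path datum in an object of $k-\Sm^+$ provides compatible basepoint data in $\LEt X$ and $\LEt \Spec k$, so the two maps to $G$ agree up to inner automorphism, i.e.\ in $\Grp_{G}^{\out}$; naturality in morphisms of $k-\Sm^+$ then yields commutativity of the square up to natural isomorphism.

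The principal obstacle is the $\Aone$-invariance of the derived \'etale realization, namely showing that $\LEt(X \times \Aone) \to \LEt X$ is an Isaksen weak equivalence for all smooth $X$. Everything else reduces either to cited comparison theorems or to naturality bookkeeping, but $\Aone$-invariance here genuinely uses $\operatorname{char} k = 0$: in positive characteristic, wild ramification on $\Aone$ obstructs the analogous statement at $p$-primary finite coefficients, and one would have to invert the characteristic. A secondary care point is tracking the outer-homomorphism structure carefully, so that the chosen path to $\Spec\kbar$ in an object of $k-\Sm^+$ yields a well-defined morphism in $\Grp_{G}^{\out}$ rather than a morphism burdened with further inner-automorphism indeterminacy.
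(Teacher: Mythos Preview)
Your argument for commutativity of the square is essentially the same as the paper's: both invoke the Artin--Mazur/Friedlander comparison to identify $\varprojlim \pi_1 \LEt X$ with $\pi_1^{\et}(X,\overline x)$ via the fact that each classifies finite \'etale covers.

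For the factorization, however, you take a different and more ambitious route than the paper, and there is a gap in your justification. You attempt to show that $\LEt$ itself descends to $\ho_{\Aone}\Spaces$, i.e.\ that $\LEt(X\times\Aone)\to\LEt X$ is a weak equivalence in Isaksen's model structure on $\prosSet$. Your proposed justification is ``$\Aone$-invariance of \'etale cohomology with finite locally constant coefficients'' together with ``the description of weak equivalences in Isaksen's structure via local systems for $\pi_n$.'' But these two ingredients do not mesh: Isaksen's weak equivalences are detected by isomorphisms on $\pi_0$ and on the local systems $\pi_n$, not by cohomology. Passing from cohomological $\Aone$-invariance to an isomorphism of homotopy pro-groups requires an additional Whitehead-type argument for pro-spaces (or a direct comparison with the complex topological type), which you have not supplied. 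The conclusion you want is true in characteristic~$0$, but the sentence as written does not prove it.

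The paper sidesteps this entirely by proving only what is needed: since $\pi_1 = \Pi_1\circ\LEt$, it suffices that the \emph{fundamental groupoid} functor $\Pi_1$ factor through $\ho_{\Aone}\Spaces$, and for this one only needs $\pi_1^{\et}(X\times\Aone)\to\pi_1^{\et}(X)$ to be an isomorphism. The paper establishes this directly from SGA1 (combining \cite[IX Th\'eor\`eme 6.1]{sga1} to reduce to $\kbar$, then \cite[XIII Proposition 4.6]{sga1} and \cite[XII Corollaire 5.2]{sga1} to compare with the topological fundamental group). This is both shorter and avoids any appeal to higher homotopy or to the precise form of Isaksen's weak equivalences. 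Your approach would yield a stronger statement if completed, but as a proof of the proposition it does strictly more work and leaves the key step unjustified.
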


\begin{proof}
For a scheme $X$, let $\mathcal{X}$ in $\Spaces$ denote the corresponding sheaf. There is a natural isomorphism $\pi_1^{\et} (X) \cong \varprojlim \pi_1 \LEt \mathcal{X}$ for every smooth scheme $X$ over $k$ equipped with a geometric point because both sides classify finite \'etale covers of $X$. For the left hand side, this is immediate. For the right hand side, this follows from \cite[Prop 5.6]{Friedlander} and \cite[11.1]{ArtinMazur}.

To show that $\pi_1$ factors through $\ho_{\Aone} \Spaces^+_c$, it suffices to show that the functor $\Pi_1$ from spaces to pro-groupoids factors through $\ho_{\Aone} \Spaces$. Since $\Pi_1 = \Pi_1 \LEt$, we know that $\Pi_1$ factors through the homotopy category of the \'etale (respectively Nisnevich) local projective model structure. To show the factorization through $\ho_{\Aone} \Spaces$, it is thus sufficient to show that $X \times \mathbb{A}^1 \to X$ is sent to an isomorphism for all schemes $X$. This follows from the analogous claim on \'etale fundamental groups, which is true in characteristic 0. (One can see that $\pi_1^{\et}(X \times \mathbb{A}^1) \to \pi_1^{\et} X$ is an isomorphism in characteristic $0$ by combining \cite[IX Th\'eor\`eme 6.1]{sga1} with the analogous result over $\kbar$. Over $\kbar$, the map is an isomorphism by invariance of $\pi_1^{\et}$ under algebraically closed extensions of fields \cite[XIII Proposition 4.6]{sga1} and comparison with the topological fundamental group \cite[XII Corollaire 5.2]{sga1}.)
\end{proof}

\begin{exa}\label{LEtGmkveeGmk}
We compute $\pi_1(\G_{m,k} \vee \G_{m,k}, \ast ) \to G$. The map $\ast \to \G_{m,k}$ corresponding to the point $1$ is a flasque cofibration because it is the push-out product of itself and $\partial \Delta^0 \to \Delta^0$. See \cite[Definition 3.2]{Isaksen_flasque}. Since representable presheaves are projective cofibrant\hidden{\cite[proof of Corollary 4]{Isaksen}}, $\ast$ and $\G_{m,k}$ are projective cofibrant, whence also flasque cofibrant. It follows that $\G_{m,k} \vee \G_{m,k}$ is a homotopy colimit in the flasque model structure. Since $\Et$ is a left Quillen functor on the Nisnevich (or \'etale) local flasque model structure \cite[Theorem 3.4]{Quick_pro-finite_htpy} and since there is a weak equivalence between $\Et$ derived with respect to the local  flasque model structure, and $\LEt$, which denotes $\Et$ derived with respect to the projective local model structure, it follows that \begin{equation*}\xymatrix{\LEt (\Spec k) \ar[r] \ar[d] & \LEt(\G_{m,k}) \ar[d] \\  \LEt(\G_{m,k}) \ar[r] & \LEt (\G_{m,k} \vee \G_{m,k}) } \end{equation*} is a homotopy push-out square.  \hidden{To see that that there is a weak equivalence between $\Et$ derived with respect to the local  flasque model structure, and $\LEt$, note that the flasque derived and the projective derived $\Et$ both map to $\Et$ applied to the projective cofibrant replacement of the flasque cofibrant replacement. Since projective cofibrations are also flasque cofibrations  by \cite[Theorem 3.7]{Isaksen_flasque}, these maps are both weak equivalences because $\Et$ preserves weak equivalences between flasque cofibrant objects by Ken Brown's lemma and the fact that $\Et$ is a left Quillen functor with respect to the flasque model structure. Alternatively, we could change $\LEt$ as to be the derived functor for the flasque model structure.}

Let $\{ \langle x, y \vert x^n=1= y^n\rangle \}_n$ denote the pro-group given as the inverse system over $n$ of the free product of $\Z/n$ with $\Z/n$ and transition maps induced by quotient maps $\Z/(nm) \to \Z/n$. Let $G$ act on $\langle x,y \vert x^n=1= y^n \rangle$ by \begin{equation*}g x = x^{\chi(g)} \quad \quad g y = y^{\chi(g)}.\end{equation*} Let $I$ denote the directed set consisting of pairs $(n,H)$ with $n$ a positive integer and $H$ a finite quotient of $G$ which acts on $k(\mu_n)$, i.e., $H$ is such that the fixed field of $\Ker(G \to H)$ contains the $n$th roots of unity in $k$. $I$ is defined so that there is a map $(n,H) \to (n', H')$ exactly when $H'$ is a quotient of $H$ and $n'$ is a quotient of $n$.

We claim that $\LEt (\G_{m,k} \vee \G_{m,k}) \to \LEt \Spec k$ can be identified with $$\{ B (\langle x, y \vert x^n=1= y^n\rangle \rtimes H)\}_{(n,H) \in I }\to B \pro G^{\wedge}.$$ Since $\LEt \G_m$ is the \'etale topological type, the map $\LEt \G_m \to \LEt \Spec k$ can be identified with $B \pro (\hatZ(1) \rtimes G )^{\wedge} \to B \pro G$. It thus suffices to show that \begin{equation}\label{wanted_hp}\xymatrix{ B \pro G \ar[r] \ar[d] & \ar[d]B \pro (\hatZ(1) \rtimes G )^{\wedge} \\ \ar[r] B \pro (\hatZ(1) \rtimes G )^{\wedge} &\{ B \langle x, y \vert x^n=1= y^n\rangle \rtimes H\}_{(n,H) \in I } }\end{equation} is a homotopy push-out square. Since $B \pro G \to B \pro (\hatZ(1) \rtimes G)^{\wedge}$ is a level-wise section of a level-wise fibration of simplicial sets, it is isomorphic to a level-wise monomorphism and is therefore a cofibration. Thus is suffices to show that \eqref{wanted_hp} is a push-out.

To see this, let $D$ and $F$ be finite groups, with actions of a finite group $C$. By Van-Kampen's theorem, \begin{equation}\label{DFpush-out}\xymatrix{\ast \ar[d] \ar[r]& \ar[d]BF \\ BD \ar[r] & B(D \ast F)}\end{equation} is a push-out and a homotopy push-out, where $D \ast F$ denotes the free product of $D$ and $F$. Let $EC$ denote a universal cover of $BC$.  Applying $(-)\times_G EC$ to \eqref{DFpush-out} produces another push-out and homotopy push-out. It follows that \eqref{wanted_hp} is a push-out, as claimed. 

Thus $\pi_1(\G_{m,k} \vee \G_{m,k}, \ast ) \to G$ can be identified with the map $$\{ \langle x, y \vert x^n=1= y^n\rangle \rtimes H\}_{(n,H) \in I } \to \pro-G^{\wedge}.$$

Define $\pi'$ to be the free profinite group on two generators $\pi' = \langle x,y\rangle^{\wedge}$, and let $G$ act on $\pi'$ by \begin{equation}\label{G-action_pi'} g x = x^{\chi(g)} \quad \quad g y = y^{\chi(g)} .\end{equation}

\begin{lm}\label{From_pi'}
Any morphism in $\pGout$ from $\{ \langle x, y \vert x^n=1= y^n\rangle \rtimes H\}_{(n,H) \in I }$ to an inverse system of finite groups factors through $\pro-(\pi' \rtimes G)^{\wedge}$. 
\end{lm}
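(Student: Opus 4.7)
The plan is to identify, for any finite group $F$, the direct limit $\varinjlim_{(n,H) \in I} \Hom(\langle x, y \vert x^n=1=y^n\rangle \rtimes H, F)$ with the set of continuous homomorphisms $\pi' \rtimes G \to F$. Given $\phi: \langle x, y \vert x^n=1=y^n\rangle \rtimes H \to F$, its restriction to the first factor sends the generators $x, y$ to elements of $F$ of order dividing $n$; by the universal property of the free profinite group, this extends uniquely to a continuous map $\pi' = \langle x, y \rangle^{\wedge} \to F$. Combining with $G \twoheadrightarrow H$, I obtain a continuous homomorphism $\tilde\phi: \pi' \rtimes G \to F$. Compatibility with the semidirect product structures holds because the $G$-action on $\pi'$ from \eqref{G-action_pi'} reduces modulo $n$ to the $H$-action on $\langle x, y \vert x^n=1=y^n\rangle$. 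Conversely, any continuous map $\pi' \rtimes G \to F$ has finite image, so the images of $x, y$ have orders dividing some $n$ and $G$ acts through some finite quotient $H$ of $G$; restricting along the natural inclusion of $\langle x, y \vert x^n=1=y^n\rangle \rtimes H$ recovers such a $\phi$.

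Using this identification, I would construct the natural morphism $\{\langle x, y \vert x^n=1=y^n\rangle \rtimes H\} \to \pro-(\pi' \rtimes G)^{\wedge}$ in $\pGout$. For each finite quotient $Q$ of $\pi' \rtimes G$, the images of $x, y$ in $Q$ have finite order and $G$ acts through a finite quotient; choosing $(n_Q, H_Q) \in I$ with $n_Q$ divisible by the orders of the images of $x, y$ and with $H_Q$ a finite quotient of $G$ through which the action on $Q$ factors, and applying the identification above to the surjection $\pi' \rtimes G \twoheadrightarrow Q$, produces a homomorphism $\langle x, y \vert x^{n_Q} = 1 = y^{n_Q} \rangle \rtimes H_Q \to Q$. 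These are compatible as $Q$ varies and respect the maps to $\pro-G^{\wedge}$.

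Finally, given a morphism $f$ in $\pGout$ from $\{\langle x, y \vert x^n=1=y^n\rangle \rtimes H\}$ to an inverse system $\{F_\beta\}$ of finite groups, represented at each $\beta$ by an outer homomorphism $f_\beta: \langle x, y \vert x^n=1=y^n\rangle \rtimes H \to F_\beta$ for some $(n, H)$ depending on $\beta$, I would apply the extension above to produce continuous $\tilde f_\beta: \pi' \rtimes G \to F_\beta$, each factoring through a finite quotient of $\pi' \rtimes G$. Naturality of the extension preserves compatibility in $\beta$, yielding a morphism $\pro-(\pi' \rtimes G)^{\wedge} \to \{F_\beta\}$ in $\pGout$ whose composition with the natural morphism recovers $f$ by construction. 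The main technical point will be verifying well-definedness at the level of outer (rather than ordinary) homomorphisms: inner automorphisms of $F_\beta$ acting on $f_\beta$ must translate to inner automorphisms acting on $\tilde f_\beta$, which holds because the extension construction is natural and because inner automorphisms of finite quotients of $\pi'\rtimes G$ pull back to inner automorphisms of $\pro-G^{\wedge}$ on the base.
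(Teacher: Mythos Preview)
Your proposal is correct and follows essentially the same approach as the paper: identify $\varinjlim_{(n,H)}\Hom(\langle x,y\mid x^n=1=y^n\rangle\rtimes H,\,F)$ with the set of continuous homomorphisms $\pi'\rtimes G\to F$ for each finite $F$, and then pass to the inverse limit over the target system. The paper's write-up is slightly more economical: rather than building the bijection by hand in both directions, it observes that the single natural map $\pi'\rtimes G\to(\langle x,y\mid x^n=1=y^n\rangle\rtimes H)^{\wedge}$ (coming from $x\mapsto x\rtimes 1$, $y\mapsto y\rtimes 1$ together with $G\twoheadrightarrow H$) induces the desired bijection after checking compatibility with the transition maps, and it disposes of the ``outer'' issue in one line by noting that any morphism in $\pGout$ is represented by an honest morphism of pro-groups. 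One small point you gloss over: in your converse step you need the pair $(n,H)$ to lie in $I$, i.e.\ the fixed field of $\ker(G\to H)$ must contain $\mu_n$; this is easily arranged by enlarging $H$ if necessary, but is worth mentioning.
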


\begin{proof}
Let $\{J_{\alpha} \}_{\alpha \in A}$ be a pro-group with each $J_{\alpha}$ finite, and suppose $\{J_{\alpha} \}_{\alpha \in A}$ is equipped with a map $\{J_{\alpha} \}_{\alpha \in A} \to \pro-G^{\wedge} .$ Any morphism in $\pGout$ from $\{ \langle x, y \vert x^n=1= y^n\rangle \rtimes H\}_{(n,H) \in I }$ to $\{J_{\alpha} \}_{\alpha \in A}$ is represented by a morphism of pro-groups $$\{ \langle x, y \vert x^n=1= y^n\rangle \rtimes H\}_{(n,H) \in I } \to \{J_{\alpha} \}_{\alpha \in A}.$$ Such a morphism is an element of \begin{equation}\label{mor_pro-group}\varprojlim_{\alpha} \varinjlim_{(n,H)} \Hom(\langle x, y \vert x^n=1= y^n\rangle \rtimes H, J_{\alpha}).\end{equation} Since $J_{\alpha}$ is finite, this set is in natural bijection with  $$\varprojlim_{\alpha} \varinjlim_{(n,H)} \Hom((\langle x, y \vert x^n=1= y^n\rangle \rtimes H)^{\wedge}, J_{\alpha}).$$ Since there is a map $\langle x, y \rangle \to \langle x, y \vert x^n=1= y^n\rangle \rtimes H$ sending $x$ to $x \rtimes 1$ and $y$ to $y \rtimes 1$, there is an induced map $\pi' \to (\langle x, y \vert x^n=1= y^n\rangle \rtimes H)^{\wedge}$. Since this map is equivariant with respect to the quotient $G \to H$, there is an induced map $\pi' \rtimes G \to (\langle x, y \vert x^n=1= y^n\rangle \rtimes H)^{\wedge}$. By checking compatibility with the transition maps, it follows that the set of morphisms \eqref{mor_pro-group} is in natural bijection with $$\varprojlim_{\alpha}  \Hom(\pi' \rtimes G, J_{\alpha}).$$
\end{proof}
\end{exa}

Let $\rH^i(-,\Z/n) : \prosSet \to \Ab$ denote the functor which takes a pro-simplicial set $\{ X_\alpha\}_{\alpha \in I}$ to the abelian group $\colim_{\alpha \in I} \rH^i(X_{\alpha}, \Z/n)$, cf. \cite[\S 5]{Friedlander}. By \cite[Proposition 18.4]{Isaksen_model_protop}, $\rH^i(-\Z/n)$ passes to the homotopy category and determines a functor $$ \rH^i(-,\Z/n) : \ho \pro-\sSet \to \Ab.$$ Let $\rH^i_{\et}(-,\Z/n)$ denote the usual \'etale cohomology groups of a scheme with coefficients in $\Z/n$.

\begin{pr}\label{Hi_spaces_functor}
$\rH^i_{\et}(-,\Z/n): \Sm_k \to \pro \Ab$ factors through $\ho_{\Aone} \Spaces$. 
\end{pr}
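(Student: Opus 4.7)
The plan is to extend $\rH^i_{\et}(-,\Z/n)$ to a functor on $\ho_{\Aone}\Spaces$ by composing the derived \'etale realization $\LEt$ with the cohomology functor $\rH^i(-,\Z/n)$ on pro-simplicial sets, then verify that the composite restricts correctly and respects $\Aone$-equivalences.

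First I would observe that the composite
\[
\widetilde{\rH}^i(-,\Z/n) \;:=\; \rH^i(\LEt(-),\Z/n)
\]
is a well-defined functor $\ho\Spaces \to \Ab$: $\LEt$ exists as the left-derived functor of the left Quillen functor $\Et$ by \cite[Theorem 2]{Isaksen}, and $\rH^i(-,\Z/n)$ on $\ho \pro\text{-}\sSet$ is provided by \cite[Proposition 18.4]{Isaksen_model_protop}.

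Second I would identify $\widetilde{\rH}^i$ with ordinary \'etale cohomology on $\Sm_k$. For $X$ a smooth scheme with representing sheaf $\mathcal{X}$, the Artin--Mazur/Friedlander comparison theorem (\cite[Corollary 12.12]{ArtinMazur}, \cite[Proposition 5.9]{Friedlander}) yields a natural isomorphism $\widetilde{\rH}^i(\mathcal{X},\Z/n) \cong \rH^i_{\et}(X,\Z/n)$, which identifies the restriction of $\widetilde{\rH}^i$ to $\Sm_k$ with the desired functor.

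Third I would descend $\widetilde{\rH}^i$ to $\ho_{\Aone}\Spaces$. By the universal property of the left Bousfield localization defining the $\Aone$-local model structure, it suffices to verify that the generating $\Aone$-acyclic maps $\mathcal{X}\times \Aone \to \mathcal{X}$ (for $X$ in $\Sm_k$) are sent to isomorphisms. Via the comparison of Step 2 this reduces to $\Aone$-invariance of \'etale cohomology with $\Z/n$ coefficients, i.e.\ $\rH^i_{\et}(X\times \Aone,\Z/n)\to \rH^i_{\et}(X,\Z/n)$ being an isomorphism for $X$ smooth over the characteristic-zero field $k$. This is classical: apply the Leray spectral sequence to the projection $p\colon X\times \Aone \to X$, using the vanishing $\mathrm{R}^q p_\ast \Z/n = 0$ for $q>0$ (and $p_\ast\Z/n = \Z/n$) in characteristic zero.

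The main obstacle, such as it is, is the justification that inverting only the generating projections in Step 3 suffices to factor $\widetilde{\rH}^i$ through $\ho_{\Aone}\Spaces$. This is automatic from the universal property of left Bousfield localization, since the $\Aone$-local homotopy category is obtained from the Nisnevich/\'etale local homotopy category by formally inverting precisely the class $\{\mathcal{X}\times \Aone \to \mathcal{X}\}_{X\in \Sm_k}$; a homotopy functor on $\ho\Spaces$ inverting these maps therefore inverts all $\Aone$-weak equivalences and yields the required factorization through $\ho_{\Aone}\Spaces$.
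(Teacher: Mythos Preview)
Your proposal is correct and follows essentially the same route as the paper: define the composite $\rH^i(\LEt(-),\Z/n)$, identify it with \'etale cohomology on schemes via \cite[Proposition 5.9]{Friedlander}, and then use that the $\Aone$-model structure is a left Bousfield localization at the projections $X\times\Aone\to X$ together with $\Aone$-invariance of \'etale cohomology. The only cosmetic differences are that the paper cites \cite[Corollary 4]{Isaksen} explicitly to identify $\LEt$ of a scheme with Friedlander's \'etale topological type (you use this implicitly in Step~2), and the paper cites \cite[VI Corollary 4.20]{Milnebook} for $\Aone$-invariance where you sketch the Leray spectral sequence argument directly.
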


\begin{proof}
By \cite[Corollary 4]{Isaksen}, $\LEt X$ is the \'etale topological type of \cite{Friedlander}. Thus by \cite[Proposition 5.9]{Friedlander}, $\rH^i(\LEt X, \Z/n)$ is naturally isomorphic to the \'etale cohomology $\rH^i_{\et}(X,\Z/n)$. Thus is suffices to show that $$\rH^i(\LEt (-), \Z/n): \ho \Spaces \to \Ab $$ factors through $\ho_{\Aone} \Spaces$.  Since the $\Aone$-model structure is obtained by left Bousfield localization at the maps $X \times \Aone \to X$ for every scheme $X$, it suffices to show that $\LEt$ takes $X \times \Aone \to X$ to an isomorphism of abelian groups. This is true by \cite[VI Corollary 4.20]{Milnebook}.
\end{proof}

Let $\rH^i(-,\Z/n)$ also denote the functor $\rH^i(-,\Z/n): \Spaces \to \Ab$ given by $\rH^i(\LEt (-), \Z/n)$. As in Proposition \ref{Hi_spaces_functor}, $\rH^i(-,\Z/n)$ factors through $\ho_{\Aone} \Spaces$. 

\begin{pr}\label{Hi+1Sigma=Hi}
There is a natural isomorphism of functors $\rH^i(-, \Z/n) \cong \rH^{i+1}(\Sigma(-), \Z/n)$.
\end{pr}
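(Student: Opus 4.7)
The plan is to reduce to the classical suspension isomorphism by exploiting that $\LEt$ preserves homotopy pushouts.

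First, I would present $\Sigma X = S^1 \wedge X$ as the homotopy pushout of the diagram $\ast \leftarrow X \rightarrow \ast$ in $\Spaces$. (When $X$ is treated unpointed, the same presentation gives the unreduced simplicial suspension, which by the discussion preceding Proposition \ref{etpi_on_spaces} is canonically isomorphic in $\ho_{\Aone} \Spaces$ to the reduced one, so it does not matter which one we use.) Since $\Et$ is a left Quillen functor with respect to the local projective model structure on $\Spaces$ and the Isaksen model structure on $\prosSet$, its total left derived functor $\LEt$ preserves homotopy pushouts. Therefore $\LEt(\Sigma X)$ is naturally isomorphic in $\ho \prosSet$ to the simplicial suspension $\Sigma(\LEt X)$, computed as the homotopy pushout of $\ast \leftarrow \LEt X \rightarrow \ast$ in pro-simplicial sets.

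Second, I would establish the suspension isomorphism $\rH^{i+1}(\Sigma Y, \Z/n) \cong \rH^i(Y, \Z/n)$ for any pro-simplicial set $Y = \{Y_\alpha\}$. At each level $\alpha$, the Mayer--Vietoris long exact sequence associated to the pushout $\ast \leftarrow Y_\alpha \rightarrow \ast$ reads
$$\cdots \to \rH^i(\Sigma Y_\alpha, \Z/n) \to \rH^i(\ast, \Z/n)^{\oplus 2} \to \rH^i(Y_\alpha, \Z/n) \to \rH^{i+1}(\Sigma Y_\alpha, \Z/n) \to \cdots,$$
from which the classical suspension isomorphism follows (in reduced cohomology, so that the contributions from $\rH^\ast(\ast, \Z/n)$ drop out). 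Passing to the filtered colimit over $\alpha$ preserves this isomorphism, since filtered colimits of abelian groups are exact.

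Chaining the two steps gives $\rH^{i+1}(\Sigma X, \Z/n) = \rH^{i+1}(\LEt \Sigma X, \Z/n) \cong \rH^{i+1}(\Sigma \LEt X, \Z/n) \cong \rH^i(\LEt X, \Z/n) = \rH^i(X, \Z/n)$, and each isomorphism in the chain is natural in $X$. The only subtle point is to reconcile the reduced suspension isomorphism with the unreduced functor $\rH^i$ defined earlier; since $\Sigma X$ is canonically pointed (via the $S^1$-factor) and reduced and unreduced cohomology agree in positive degrees, this is straightforward bookkeeping. The main work is the identification $\LEt \Sigma X \simeq \Sigma \LEt X$, and that is delivered for free by the left Quillen property of $\Et$.
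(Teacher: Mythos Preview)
Your argument is correct and follows the same route as the paper: use that $\LEt$, being left Quillen, preserves homotopy pushouts to identify $\LEt(\Sigma X)$ with the levelwise suspension $\{\Sigma X_\alpha\}$, and then apply the classical suspension isomorphism at each level before passing to the colimit. The only point the paper makes more explicit is the justification that the homotopy pushout of $\ast \leftarrow \{X_\alpha\} \rightarrow \ast$ in $\prosSet$ is computed levelwise (since in Isaksen's model structure cofibrations are isomorphic to levelwise cofibrations and levelwise homotopy equivalences are weak equivalences), which you invoke tacitly when you pass to ``each level $\alpha$''.
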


\begin{proof}
Let $X$ be an object of $\spaces{k}_{\ast}$. Since left derived functors commute with homotopy colimits, $$\xymatrix{\LEt X \ar[r] \ar[d] & \LEt \ast  \ar[d] \\ \LEt \ast \ar[r] & \LEt \Sigma X}$$ is a push-out square in the model structure of \cite{Isaksen_model_protop}.

In the model structure of \cite{Isaksen_model_protop}, the cofibrations are isomorphic to levelwise cofibrations of systems of simplicial sets of the same shape. Also, levelwise homotopy equivalences are weak equivalences. It follows that $$\xymatrix{ \{ X_\alpha\}_{\alpha \in A} \ar[r] \ar[d]&\ast \ar[d]\\ \ast \ar[r] & \{ \Sigma X_{\alpha}\}_{\alpha \in A}}$$ is a homotopy push-out. In particular, letting $\{ X_\alpha\}_{\alpha \in A} = \LEt X$, we have that $\LEt \Sigma X \cong \{ \Sigma X_{\alpha}\}_{\alpha \in A}$.  

The proposition then follows from the fact that in ordinary cohomology of simplicial sets, we have $\rH^{i+1}(\Sigma A_{\alpha}, \Z/n) \cong \rH^i(A_{\alpha}, \Z/n)$.
\end{proof}

\begin{pr}\label{H1=Hompi}
There is a natural isomorphism of functors $$\rH^1(-,\Z/n) \cong \Hom(\pi_1(-), \Z/n): \Spaces_c^+ \to \Ab.$$
\end{pr}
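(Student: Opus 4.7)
The plan is to reduce to the classical isomorphism $\rH^1(Y,\Z/n)\cong \Hom(\pi_1(Y,\ast),\Z/n)$ for a pointed connected simplicial set $Y$, and then lift to pro-simplicial sets via the colimit formula for cohomology. Both sides of the desired isomorphism factor through $\LEt$: the left side by the definition of $\rH^1(-,\Z/n)$ on $\Spaces$ given just before Proposition~\ref{Hi_spaces_functor}, and the right side by the definition of $\pi_1$ on $\Spaces^+_c$ given in the previous subsection. So it suffices to establish a natural isomorphism for pointed pro-simplicial sets $\{X_{\alpha}\}_{\alpha\in A}$ whose pro-$\pi_0$ is trivial.

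Unwind both sides. By the definition of $\rH^1$ on pro-simplicial sets recalled just before Proposition~\ref{Hi_spaces_functor}, we have
\[
\rH^1(\{X_\alpha\}, \Z/n) \;=\; \colim_{\alpha \in A} \rH^1(X_\alpha,\Z/n).
\]
On the other hand, $\pi_1(\{X_\alpha\},\ast)$ is the pro-group $\{\pi_1(X_\alpha,\ast)\}_{\alpha\in A}$, and since $\Z/n$ is finite and viewed as a constant pro-group, the standard formula for morphisms in the pro-category yields
\[
\Hom\bigl(\{\pi_1(X_\alpha,\ast)\},\,\Z/n\bigr) \;=\; \colim_{\alpha\in A}\Hom(\pi_1(X_\alpha,\ast),\Z/n).
\]
(Because $\Z/n$ is abelian, the outer-homomorphism equivalence relation in $\pGout$ is trivial on maps landing in $\Z/n$, so it makes no difference whether we interpret $\Hom$ in $\pGout$ or as ordinary pro-group homomorphisms.)

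It then suffices to produce a natural isomorphism $\rH^1(X_\alpha,\Z/n)\cong \Hom(\pi_1(X_\alpha,\ast),\Z/n)$ compatibly in $\alpha$. For each pointed connected $X_\alpha$ this is the classical computation, obtained either by representing $\rH^1(-,\Z/n)$ by the Eilenberg--MacLane space $B\Z/n=K(\Z/n,1)$, or by combining the Hurewicz theorem with the universal coefficient theorem (no $\Ext$ term since $H_0$ is free). To handle levels $X_\alpha$ that are not connected, let $X_\alpha^0\subset X_\alpha$ be the connected component of the basepoint. The hypothesis that $\LEt X$ is connected means the pro-set $\{\pi_0(X_\alpha)\}$ is isomorphic to $\ast$, so for each $\alpha$ there is $\beta\geq \alpha$ such that $X_\beta\to X_\alpha$ factors through $X_\alpha^0\hookrightarrow X_\alpha$. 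This forces
\[
\colim_{\alpha}\rH^1(X_\alpha,\Z/n) \;=\; \colim_{\alpha}\rH^1(X_\alpha^0,\Z/n),
\]
reducing to the connected case; since $\pi_1(X_\alpha,\ast)=\pi_1(X_\alpha^0,\ast)$, the corresponding identification holds for the right side as well.

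Naturality in $X$ is immediate from the naturality of each step in the pro-pointed simplicial set. The main step that requires care is the cofinality argument in the last paragraph: verifying that triviality of the pro-$\pi_0$ really does let one replace $X_\alpha$ by its basepoint component inside the colimit. Everything else is formal manipulation of the colimit description of $\rH^1$ on pro-simplicial sets and the classical identification of $\rH^1$ with $\Hom(\pi_1,\Z/n)$.
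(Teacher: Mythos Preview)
Your proof is correct and follows essentially the same approach as the paper: reduce to $\LEt$, invoke the classical isomorphism $\rH^1(Y,\Z/n)\cong\Hom(\pi_1(Y),\Z/n)$ for simplicial sets, and pass to pro-objects via the colimit description. The paper's own proof is a terse three-sentence version of exactly this; you have unpacked the colimit formulas and added the cofinality argument reducing to basepoint components, which the paper leaves implicit.
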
\hidden{this $\pi_1$ is the pro-group.}

\begin{proof}
The claim is equivalent to exhibiting a natural isomorphism $$\rH^1(\LEt(-),\Z/n) \cong \Hom(\pi_1 \LEt(-), \Z/n).$$  There is natural isomorphism a natural isomorphism $$\rH^1(-,\Z/n) \cong \Hom(\pi_1(-), \Z/n): \ho \sSet \to \Ab.$$ This induces a natural isomorphism $$\rH^1(-,\Z/n) \cong \Hom(\pi_1(-), \Z/n): \ho \prosSet \to \Ab,$$ where $\Hom$ is the homomorphisms in the category of pro-groups. The desired natural isomorphism is obtained by pulling back by $\LEt$.
\end{proof}

\section{Stable isomorphism $\proj_k^1 - \{0,1,\infty \} \cong \G_m \vee \G_m$}\label{stab_iso_section}

Recall that the smash product $X \wedge Y$ of two pointed spaces $X$ and $Y$ is $X \wedge Y = X \times Y/ (\ast \times Y \cup X \times \ast)$, and that the wedge product $X \vee Y$ is the disjoint union with the two base points identified. These formulas hold sectionwise for simpllicial presheaves, e.g. $(X \vee Y)(U) = X(U) \vee Y(U)$. The simplicial suspension $\Sigma X$ of $X$ in $\Spaces$ is $\Sigma X = S^1 \wedge X$. Let $S$ denote the unreduced simplicial suspension, $S X = I \times X/ \sim$, where $I$ denotes the standard $1$-simplex, and $\sim$ denotes the equivalence relation defined $0 \times X \sim \ast_0$ and $1 \times X \sim \ast_1$, where $\ast_0$ and $\ast_1$ are two copies of the terminal object. 

\begin{pr}\label{canonical_iso_SigmaP1minus3points}
There is a canonical isomorphism $ \Sigma (\G_m \vee \G_m) \to S (\proj_k^1 - \{0,1,\infty \})$ in $\ho_{\Aone} \Spaces$ which sends $\ast_0$ to the base point.
\end{pr}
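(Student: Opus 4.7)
The plan is to derive the isomorphism from a single application of the Morel--Voevodsky purity theorem. Set $U = \proj^1_k - \{0,1,\infty\}$ and $V = \proj^1_k - \{1\}$, and view $U \hookrightarrow V$ as an open immersion with closed complement $Z = \{0\} \sqcup \{\infty\}$. The subscheme $Z$ is a disjoint union of two copies of $\Spec k$ sitting inside the smooth curve $V$, with normal bundle a disjoint union of two trivial line bundles; a M\"obius change of coordinates identifies $V$ with $\Aone_k$, so $V$ is $\Aone$-contractible.

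I would compute the homotopy cofiber $V/U$ in two different ways. First, purity \cite[Theorem 2.23]{MV} gives a canonical $\Aone$-equivalence
\[
V/U \;\simeq\; \Th(N_{Z/V}) \;\simeq\; T \vee T,
\]
since the Thom space of a disjoint union is the wedge of the individual Thom spaces and each normal bundle is trivial, where $T = \Aone_k / \G_{m,k}$. Combined with the standard identification $T \simeq S^1 \wedge \G_m = \Sigma\G_m$ (coming from the Nisnevich square $\Aone \leftarrow \G_m \to \Aone$ whose homotopy pushout computes $\proj^1$), this yields a canonical $\Aone$-equivalence $V/U \simeq \Sigma(\G_m \vee \G_m)$. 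Second, $V/U$ is by construction the homotopy pushout of $\ast \leftarrow U \hookrightarrow V$ in $\Spaces$; since $V$ is $\Aone$-contractible, we may replace it by the terminal object, and the resulting homotopy pushout $\ast \sqcup^h_U \ast$ is precisely the unreduced suspension $SU$. One cone point of $SU$ comes from $V$ and the other from the collapsed copy of $U$, and labeling so that $\ast_0$ is the cone point coming from $V$ matches the basepoint condition of the statement.

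Composing the two $\Aone$-equivalences produces the desired canonical isomorphism
\[
\Sigma(\G_m \vee \G_m) \;\xleftarrow{\simeq}\; V/U \;\xrightarrow{\simeq}\; S U
\]
in $\ho_{\Aone}\Spaces$. The only delicate point is the bookkeeping of canonicality: the canonical trivializations of the two normal bundles underlying the purity identification, the canonical equivalence $T \simeq \Sigma\G_m$, and the matching of the cone points so that $\ast_0$ maps to the base point. Everything else is routine given the model-categorical framework already in place.
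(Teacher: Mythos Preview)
Your proposal is correct and follows essentially the same approach as the paper: the paper uses the closed pair $\{0,1\} \hookrightarrow \Aone_k$ (with open complement $\proj^1_k - \{0,1,\infty\}$) rather than your $\{0,\infty\} \hookrightarrow \proj^1_k - \{1\}$, but these differ only by a M\"obius change of coordinates, and both arguments then identify the cofiber via purity with a wedge of two Thom spaces of trivial line bundles over a point and separately with the unreduced suspension. The paper is slightly more explicit about why the trivializations are canonical (no automorphism of $\Aone$ fixes both $0$ and $1$, pinning down the coordinate $z$ and hence the normal directions), which is exactly the ``bookkeeping of canonicality'' you flag as the delicate point.
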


\begin{proof}
Let $i: Z \to \mathbb{A}^1_k$ be the reduced closed subscheme corresponding to the closed set $\{0,1\}$. Note that $\mathbb{A}^1_k- i(Z) \cong \proj_k^1 - \{0,1,\infty \}$ is an isomorphism of schemes. Let $\calN (i) \to Z$ denote the normal bundle to $i$, and let $\Th(\calN (i))$ denote the Thom space of $\calN (i)$, as in \cite[Definition 2.16]{MV}. By \cite[Theorem 2.23]{MV}, there is a canonical isomorphism \begin{equation}\label{A1dividedA1minus2points=Th}\Th(\calN (i))  \cong \mathbb{A}^1_k/ ( \mathbb{A}^1_k- i(Z)) \end{equation} in $\ho_{\Aone} \Spaces$. Since $\mathbb{A}^1_k- i(Z)  \to \mathbb{A}^1$ is an open immersion, it is a monomorphism and therefore a cofibration. It follows that $ \mathbb{A}^1_k/ ( \mathbb{A}^1_k- i(Z))$ is equivalent to the homotopy cofiber of $\mathbb{A}^1_k- i(Z) \to \mathbb{A}^1_k$. Since $\mathbb{A}^1_k \to \ast$ is a weak equivalence, this homotopy cofiber is equivalent to the homotopy cofiber of $\mathbb{A}^1_k- i(Z) \to \ast$. This later homotopy cofiber is equivalent to the unreduced suspension $S (\mathbb{A}^1_k- i(Z))$. 

Let $\mathcal{O}$ denote the trivial bundle of rank $1$ over $Z$, $\mathcal{O} = Z \times \mathbb{A}^1$, and let $\proj \calN (i) \to \proj (\calN (i) \oplus \mathcal{O})$ denote the closed embedding at infinity. The vector bundle $\calN (i)$ is trivial of rank $1$ over $Z$. Since there are no automorphisms of $\mathbb{A}^1$ fixing $0$ and $1$, we have a canonical coordinate $z$ with $\mathbb{A}^1= \Spec k[z]$. For any $p \in k$, the map $k[z]/\langle z-p\rangle \to \langle z-p\rangle/  \langle z-p\rangle^2$ sending $f(z)$ to $f(p)(z-p)$ gives a canonical trivialization of the normal bundle of the closed immersion $\Spec k[z]/\langle z-p \rangle \to \mathbb{A}^1$. This gives a trivialization of $\calN(i)$. We obtain a canonical isomorphism  $\proj (\calN (i) \oplus \mathcal{O})/ \proj \calN (i) \cong \proj^1 \vee \proj^1$.  Use the coordinate $z$ on $\mathbb{A}^1$ and $\G_m = \Spec k[z, \frac{1}{z}]$. The reasoning above gives an equivalence $S\G_{m,k} \cong \mathbb{A}^1/ \G_{m,k} \to \proj^1_k$. The natural map $S \G_{m,k} \to \Sigma \G_{m,k}$ is a sectionwise weak equivalence, and thus gives a canonical isomorphism in the homotopy category. This yields a canonical isomorphism $\Sigma( \G_{m,k} \vee \G_{m,k}) \cong \proj (\calN (i) \oplus \mathcal{O})/ \proj \calN (i)$ in $\ho_{\Aone} \Spaces$. By \cite[Proposition 2.17. 3.]{MV}, there is a canonical equivalence $\proj (\calN (i) \oplus \mathcal{O})/ \proj \calN (i) \to \Th(\calN (i))$. Combining with \eqref{A1dividedA1minus2points=Th} produces the desired  canonical isomorphism.
\end{proof}

\begin{co}\label{wp_co}
For any choice of base point of $\proj_k^1 - \{0,1,\infty \}$, there is a canonical isomorphism $\Sigma (\G_m \vee \G_m) \to \Sigma (\proj_k^1 - \{0,1,\infty \})$ in $\ho_{\Aone} \Spaces$.
\end{co}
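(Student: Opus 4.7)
The plan is to deduce Corollary~\ref{wp_co} from Proposition~\ref{canonical_iso_SigmaP1minus3points} by comparing the unreduced suspension $S$ with the reduced suspension $\Sigma$. Proposition~\ref{canonical_iso_SigmaP1minus3points} already provides the isomorphism $\Sigma(\G_m \vee \G_m) \cong S(\proj_k^1 - \{0,1,\infty\})$ in $\ho_{\Aone}\Spaces$, so what remains is to produce a canonical isomorphism $S(\proj_k^1 - \{0,1,\infty\}) \cong \Sigma(\proj_k^1 - \{0,1,\infty\})$ once a base point has been chosen, and then compose.

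In detail: pick any base point $x_0 \in \proj_k^1 - \{0,1,\infty\}$ (say, the rational point $2$, assuming it is distinct from $0,1$), giving an inclusion $\ast \to \proj_k^1 - \{0,1,\infty\}$. By the same reasoning used in Example~\ref{LEtGmkveeGmk}, this inclusion is a flasque (and in particular, projective) cofibration, since it is the pushout-product of the map of representable sheaves with $\partial\Delta^0 \to \Delta^0$. Applying the functor $I \times (-)/\!\sim$ preserves cofibrations, so the induced inclusion $S\ast \to S(\proj_k^1 - \{0,1,\infty\})$ is a cofibration. Then the natural collapse map
\[
S(\proj_k^1 - \{0,1,\infty\}) \;\longrightarrow\; S(\proj_k^1 - \{0,1,\infty\})/S\ast \;=\; \Sigma(\proj_k^1 - \{0,1,\infty\})
\]
has cofiber weakly equivalent to the above quotient, and since $S\ast \cong \Delta^1$ is simplicially (and a fortiori $\Aone$-) contractible, this collapse map is a weak equivalence in $\ho_{\Aone}\Spaces$. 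Composing its inverse with the isomorphism of Proposition~\ref{canonical_iso_SigmaP1minus3points} yields the desired canonical isomorphism in $\ho_{\Aone}\Spaces$.

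The only mildly delicate point is verifying that the chosen base point inclusion $\ast \to \proj_k^1 - \{0,1,\infty\}$ is actually a cofibration in the model structure at hand; this is handled exactly as in Example~\ref{LEtGmkveeGmk}. If one insists on an arbitrary (not necessarily rational) geometric base point, one can instead replace $\proj_k^1 - \{0,1,\infty\}$ by a cofibrant model of the corresponding pointed object and apply the same argument, since everything is taking place in the homotopy category. I do not anticipate any real obstacle beyond this bookkeeping.
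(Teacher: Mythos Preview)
Your approach is correct and is essentially the same as the paper's: both reduce to Proposition~\ref{canonical_iso_SigmaP1minus3points} by identifying $S(\proj_k^1-\{0,1,\infty\})$ with $\Sigma(\proj_k^1-\{0,1,\infty\})$ via the natural collapse map. The paper dispatches this in one line by observing that the collapse $S X \to \Sigma X$ is a \emph{sectionwise} weak equivalence (on each $U$ one collapses a contractible interval included by a monomorphism of simplicial sets), so your cofibration bookkeeping in the global model structure, while not wrong, is unnecessary. One small slip: you write ``flasque (and in particular, projective) cofibration,'' but the implication goes the other way---projective cofibrations are flasque cofibrations, not conversely; Example~\ref{LEtGmkveeGmk} only establishes the flasque case.
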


\begin{proof}
Since the natural map from the unreduced suspension to the reduced is a sectionwise weak equivalence, the unreduced suspension is equivalent to the reduced in a canonical manner. Thus we have a canonical equivalence between $S(\proj_k^1 - \{0,1,\infty \})$ and $\Sigma (\proj_k^1 - \{0,1,\infty \})$, so the corollary follows from Proposition \ref{canonical_iso_SigmaP1minus3points}. 
\end{proof}

Let $c_i : \G_m \vee \G_m \to \G_m$ for $i=1$ (respectively $i=2$) be the map which crushes the first (respectively second) summand of $\G_m$. Let $a_1: \proj_k^1 - \{0,1,\infty \} \to \G_m = \mathbb{P}^1 -\{0, \infty\}$ denote the open immersion. Let $a_2: \proj_k^1 - \{0,1,\infty \} \cong \Spec k[z, \frac{1}{z}, \frac{1}{z-1}] \to \G_m \cong \Spec k[z, \frac{1}{z}]$ be given by $a_2^* (z) = z-1$. Consider these maps as unpointed. 

\begin{lm}\label{Sai=ciwp}
Let $i=1$ or $2$. The following diagram, whose top horizontal map is the isomorphism of Proposition \ref{canonical_iso_SigmaP1minus3points}, $$\xymatrix{ & S \G_m   \ar[r]^{\cong} &\Sigma \G_m & \\ S (\proj_k^1 - \{0,1,\infty \})\ar[ru]^{S a_i} \ar[r]^{\cong}  & \Sigma (\proj_k^1 - \{0,1,\infty \})   && \ar[ll] \ar[lu]_{\Sigma c_i}  \Sigma (\G_m \vee \G_m) }$$  is commutative in $\ho_{\Aone} \Spaces$. 
\end{lm}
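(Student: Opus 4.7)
The plan is to unwind the equivalence of Proposition \ref{canonical_iso_SigmaP1minus3points} step-by-step and track how each $a_i$ interacts with the purity identification. First I would extend each $a_i$ to an isomorphism $\tilde{a}_i : \mathbb{A}^1 \to \mathbb{A}^1$: take $\tilde{a}_1 = \mathrm{id}$ and $\tilde{a}_2(z) = z - 1$. Via the open immersions $\proj^1_k - \{0,1,\infty\} \cong \mathbb{A}^1 - \{0,1\} \hookrightarrow \mathbb{A}^1$ and $\G_m \cong \mathbb{A}^1 - \{0\} \hookrightarrow \mathbb{A}^1$ these fit into a commutative square, and taking cofibers produces a map
$$\mathbb{A}^1/(\mathbb{A}^1 - \{0,1\}) \to \mathbb{A}^1/(\mathbb{A}^1 - \{0\})$$
which, by the same homotopy-cofiber computation used in the proof of Proposition \ref{canonical_iso_SigmaP1minus3points}, is identified under the canonical equivalences with $Sa_i$.

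Next I would invoke the naturality of the purity isomorphism \cite[Theorem 2.23]{MV}. Let $p_i = \tilde{a}_i^{-1}(\{0\})$, so $p_1 = 0$ and $p_2 = 1$, and let $q_i$ denote the other point of $\{0,1\}$. Near $p_i$, $\tilde{a}_i$ is an étale map to a neighborhood of $0$; near $q_i$, it lands in $\mathbb{A}^1 - \{0\}$, which is collapsed in the target quotient. Using the canonical splitting
$$\Th(\calN_{\{0,1\}}) \cong \Th(\calN_{p_i}) \vee \Th(\calN_{q_i})$$
from the disjoint union $\{0,1\} = \{0\} \sqcup \{1\}$, the induced map on Thom spaces is the wedge of a map determined by $d\tilde{a}_i|_{p_i}$ on the $p_i$ summand with the constant map on the $q_i$ summand. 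Since $\tilde{a}_i^{\ast}(z) = z - (i-1)$, the derivative $d\tilde{a}_i|_{p_i}$ sends the canonical trivialization $f(z) \mapsto f(p_i)(z - p_i)$ of $\calN_{p_i}$ to the canonical trivialization $f(z) \mapsto f(0)\, z$ of $\calN_{\{0\}}$, so the wedge factor at $p_i$ is the identity.

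Tracing through all the intermediate identifications in Proposition \ref{canonical_iso_SigmaP1minus3points}, the upshot is that $Sa_i$ corresponds to the map $\Sigma(\G_m \vee \G_m) \to \Sigma\G_m$ which is the identity on the summand associated to $p_i$ and constant on the summand associated to $q_i$; with the labeling in which summand $i$ in $\G_m \vee \G_m$ corresponds to $q_i$, this is exactly $\Sigma c_i$. The main obstacle is bookkeeping: the purity isomorphism, the wedge decomposition coming from $\{0,1\} = \{0\}\sqcup\{1\}$, and the two canonical trivializations must be assembled together carefully, but the geometric picture --- that $a_i$ preserves the normal-bundle contribution of exactly one of the two missing points under purity --- makes the conclusion transparent.
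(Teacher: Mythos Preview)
Your proposal is correct and follows essentially the same route as the paper. The paper also splits $\Th(\calN_{\{0,1\}})$ as a wedge along $\{0\}\sqcup\{1\}$, checks that the purity identification is compatible with the collapse map $\mathbb{A}^1_k/(\mathbb{A}^1_k-\{0,1\})\to\mathbb{A}^1_k/(\mathbb{A}^1_k-\{\ell\})$ via functoriality of blow-ups, and then uses the canonical trivialization at each point; the only cosmetic difference is that you package the trivialization at the point $1$ by first extending $a_2$ to the translation $\tilde a_2(z)=z-1$ of $\mathbb{A}^1$, whereas the paper leaves the point in place and trivializes $\calN(j_1)$ with the coordinate $z-1$ afterward---your derivative check $d\tilde a_i|_{p_i}=\mathrm{id}$ is exactly the paper's statement that the canonical trivializations match. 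Your closing caveat about the labeling of the two wedge summands is well taken: the paper's own indexing (the $\ell\in\{0,1\}$ versus $i\in\{1,2\}$ in its proof) is not entirely consistent, and the lemma is only used downstream through Proposition~\ref{H2wp}, where what matters is that $\{Sa_1,Sa_2\}$ and $\{\Sigma c_1,\Sigma c_2\}$ induce the same pair of maps on $\rH^2$.
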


\begin{proof}
We keep the notation of the proof of Proposition \ref{canonical_iso_SigmaP1minus3points}. Let $i_0: \{0\} \to Z $ and $i_1: \{1\} \to Z $ be the closed (and open) immersions, and let $j_k = i \circ i_\ell$ for $\ell=0,1$. Let $\calN(j_\ell)$ denote the normal bundle to $j_\ell$. The decomposition of $Z$ as the disjoint union $Z = \{0\} \coprod \{1\}$ gives a decomposition $\calN(i) = \calN(j_0) \coprod \calN(j_1)$. The maps of pairs $(\calN(j_\ell),\calN(j_\ell) - 0) \to (\calN, \calN-0)$ for $\ell=0,1$ determine maps $\Th(\calN(j_\ell))  \to \Th(\calN(i))$ which combine to give an isomorphism $$\Th(\calN(j_0)) \vee \Th(\calN(j_1)) \to  \Th(\calN(i)).$$ Mapping $\Th(\calN(j_0))$ to the basepoint thus determines a map $\Th(\calN(i)) \to \Th(\calN(j_1))$. And we have the analogous map $\Th(\calN(i)) \to \Th(\calN(j_0))$. 

The diagram $$ \xymatrix {  \mathbb{A}^1_k/ ( \mathbb{A}^1_k- i(Z))  \ar[d]   & \ar[l] \Th(\calN (i)) \ar[d] \\   \mathbb{A}^1_k/ ( \mathbb{A}^1_k- j_0(\{0\}) & \ar[l] \Th(\calN (j_\ell))}$$ in $\ho_{\Aone}\Spaces$ is commutative by the functoriality of blow-ups and the construction of the canonical isomorphism of \cite[Theorem 2.23]{MV}.

Use the trivialization of $\calN(j_\ell)$ from the proof of Proposition \ref{canonical_iso_SigmaP1minus3points}. We obtain an isomorphism $\Th(\calN(j_\ell)) \to \proj^1$. This isomorphism fits into the commutative diagram $$\xymatrix{ \Th(\calN (i)) \ar[d] &  \ar[l] \proj^1 \vee \proj^1 \ar[d]\\ \Th(\calN (j_\ell))  & \ar[l] \proj^1 }$$where the top horizontal map is as in the proof of Proposition \ref{canonical_iso_SigmaP1minus3points}, and the right vertical morphism crushes the factor not corresponding to $\ell$. 

Place the two previous commutative diagrams side by side and use the isomorphism $\proj^1 \to \Sigma \G_m$ from the proof of Proposition \ref{canonical_iso_SigmaP1minus3points} to replace the $\proj^1$'s with $\Sigma \G_m$'s. Then note that the composition $$ \mathbb{A}^1_k/ ( \mathbb{A}^1_k- i(Z))  \to \mathbb{A}^1_k/ ( \mathbb{A}^1_k- j_\ell(\{\ell\})) \to \proj^1 \to \Sigma \G_m$$ is the composition of $S a_\ell$ with $S \G_m \to \Sigma \G_m$ after identifying $\mathbb{A}^1_k/ ( \mathbb{A}^1_k- i(Z)) \cong  \Sigma (\proj_k^1 - \{0,1,\infty \})$.  This proves the proposition. 
\end{proof}

Let $\overline{01}$ denote the tangential base point of $\proj_k^1 - \{0,1,\infty \}$ at $0$ pointing in the direction of $1$, as in \cite[\S 15]{Deligne_Galois_groups} \cite{Nakamura}, so $\overline{01}$ determines the fiber functor associated to the geometric point $$\proj_k^1 - \{0,1,\infty \}= \Spec k[z, \frac{1}{z}, \frac{1}{1-z}] \leftarrow \Spec \cup_{n\in \Z_{>0}} \kbar ((z^{1/n}))$$ $$k[z, \frac{1}{z}, \frac{1}{1-z}] \to  k(z) \to  \cup_{n\in \Z_{>0}} \kbar ((z^{1/n})).$$ Let $\pi = \pi_1^{\et}(\proj_{\overline{k}}^1 - \{0,1,\infty \}, \overline{01})$. Since the \'etale fundamental group is invariant under algebraically closed base change in characteristic $0$, we have a canonical isomorphism $\pi \cong \pi_1^{\et}(\proj_{\C}^1 - \{0,1,\infty \}, \overline{01})$. There is a canonical isomorphism between $\pi_1^{\et}(\proj_{\C}^1 - \{0,1,\infty \}, \overline{01})$ and the profinite completion of the topological fundamental group. Let $x$ be the element of the topological fundamental group represented by a small counter-clockwise loop around $0$ based at $\overline{01}$, and let $y$ be the path formed by traveling along $[0,1]$, then traveling along the image of $x$ under $z \mapsto 1-z$, and then traveling back from $1$ to $0$ along $[0,1]$. Putting this together, we have fixed an isomorphism $$\pi \cong \langle x,y \rangle^{\wedge} $$ between $\pi$ and the profinite completion of the free group on two generators $x$ and $y$. Recall that in Example \ref{Hi_spaces_functor}, we have defined $\pi' = \langle x,y \rangle^{\wedge}$ and maps out of $\pi_1(\G_{m,\overline{k}} \vee \G_{m,\overline{k}}, \ast)$ to inverse systems of finite groups factor through $\pi'$ by Lemma \ref{From_pi'}.

Let $x_n^*, y_n^* \in \Hom(\pi,\Z/n)$ be defined by $x_n^* (x) = 1$, $x_n^* (y) = 0$, $y_n^* (x) = 0$, and $y_n^* (y) = 1$. By Proposition \ref{H1=Hompi}, $\rH^1(\proj_{\kbar}^1 - \{0,1,\infty \},\Z/n)$ is a free $\Z/n$-module with basis $\{x_n^*, y_n^* \}$. Making the analogous definitions of $x_n^*$ and $y_n^*$ with $\pi'$ replacing $\pi$, Proposition \ref{H1=Hompi} and Lemma \ref{From_pi'} show that $\rH^1(\G_{m,\kbar} \vee \G_{m,\kbar}, \Z/n)$ is a free $\Z/n$-module with basis $\{x_n^*, y_n^* \}$. By Proposition \ref{Hi+1Sigma=Hi}, we obtain isomorphisms $\rH^2(\Sigma X_{\kbar},\Z/n) \cong \Z/n x_n^* \oplus \Z/n y_n^*$ for $X = \proj_{k}^1 - \{0,1,\infty \},$ and $\G_m \vee \G_m$.  

Let $\wp: \Sigma (\G_m \vee \G_m)  \to \Sigma (\proj_k^1 - \{0,1,\infty \})$ be any map determining the canonical isomorphism of Proposition \ref{canonical_iso_SigmaP1minus3points}.

\begin{pr}\label{H2wp}
$\rH^2(\wp_{\kbar}, \Z/n)$ is computed by $\rH^2(\wp_{\kbar}, \Z/n)(x_n^*) = x_n^*$ and $\rH^2(\wp_{\kbar}, \Z/n)(y_n^*) = y_n^*$.
\end{pr}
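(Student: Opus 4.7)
The plan is to combine Lemma \ref{Sai=ciwp} with the naturality of the suspension isomorphism from Proposition \ref{Hi+1Sigma=Hi} to reduce the computation of $\rH^2(\wp_{\kbar},\Z/n)$ on the two distinguished classes to an identification of pullback maps on $\rH^1$. Let $t_n^*\in \rH^1(\G_{m,\kbar},\Z/n)$ be the class dual to a chosen generator $t$ of $\pi_1^{\et}(\G_{m,\kbar})\cong \hatZ(1)$, and let $s$ denote the natural suspension isomorphism $\rH^2(\Sigma(-),\Z/n)\cong \rH^1(-,\Z/n)$. Applying $\rH^2(-,\Z/n)$ to the diagram of Lemma \ref{Sai=ciwp} and using naturality of $s$ yields
\[
\rH^2(\wp_{\kbar})\bigl(s^{-1}(a_i^*(t_n^*))\bigr) \;=\; s^{-1}\bigl(c_i^*(t_n^*)\bigr)\qquad \text{for } i=1,2.
\]
So it suffices to verify the four identifications
\[
a_1^*(t_n^*)=x_n^*,\ \ a_2^*(t_n^*)=y_n^*\in \rH^1(\proj^1_{\kbar}-\{0,1,\infty\},\Z/n),
\]
\[
c_1^*(t_n^*)=x_n^*,\ \ c_2^*(t_n^*)=y_n^*\in \rH^1(\G_{m,\kbar}\vee \G_{m,\kbar},\Z/n).
\]

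These four identifications are computed using Proposition \ref{H1=Hompi}, which identifies $\rH^1$ with $\Hom(\pi_1,\Z/n)$, by tracing the induced maps on fundamental pro-groups. The generator $x$ of $\pi$ is, via the comparison with the topological fundamental group fixed in the paragraph preceding the statement of Proposition \ref{H2wp}, a small counterclockwise loop around $0$ based at $\overline{01}$. The open immersion $a_1: \proj^1_{\kbar}-\{0,1,\infty\}\hookrightarrow \G_{m,\kbar}=\proj^1_{\kbar}-\{0,\infty\}$ sends this loop to a generator of $\pi_1^{\et}(\G_{m,\kbar})$, while the loop $y$ around $1$ becomes null-homotopic once the puncture at $1$ is filled in, so $(a_1)_*(x)=t$ and $(a_1)_*(y)=0$; dually $a_1^*(t_n^*)=x_n^*$. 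The translation $a_2:z\mapsto z-1$ is handled identically, sending $y$ to a generator of $\pi_1^{\et}(\G_{m,\kbar})$ and $x$ to a null-homotopic loop around $-1$. On the wedge side, Lemma \ref{From_pi'} shows that $\Z/n$-valued characters of $\pi_1(\G_{m,\kbar}\vee \G_{m,\kbar})$ factor through $\pi'$; its generators $x,y$ arise as loops in the first and second $\G_{m,\kbar}$ summands, so the map $c_i$ (projection onto the $i$-th summand) realizes $c_1^*(t_n^*)=x_n^*$ and $c_2^*(t_n^*)=y_n^*$.

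Combining these,
\[
\rH^2(\wp_{\kbar})(x_n^*)\;=\;\rH^2(\wp_{\kbar})\bigl(s^{-1}(a_1^*(t_n^*))\bigr)\;=\;s^{-1}\bigl(c_1^*(t_n^*)\bigr)\;=\;x_n^*,
\]
and likewise $\rH^2(\wp_{\kbar})(y_n^*)=y_n^*$. The main technical issue is the bookkeeping around the tangential base point $\overline{01}$ and the identifications of generators in $\pi_1^{\et}$ of $\G_{m,\kbar}$, of $\G_{m,\kbar}\vee \G_{m,\kbar}$, and of $\proj^1_{\kbar}-\{0,1,\infty\}$; given the functoriality of $\pi_1$ from Proposition \ref{etpi_on_spaces}, this reduces to the explicit (topological) loop calculations whose outputs are recorded above.
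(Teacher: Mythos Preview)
Your argument is essentially the paper's own proof: both use Lemma~\ref{Sai=ciwp} to get $\Sigma a_i\circ\wp=\Sigma c_i$, introduce a canonical generator of $\rH^1(\G_{m,\kbar},\Z/n)$ (the paper calls it $z_n^*$ and places it in $\rH^2$ via the suspension isomorphism, you call it $t_n^*$ and keep it in $\rH^1$), and then verify the four pullback identities $a_i^*(t_n^*)$, $c_i^*(t_n^*)$ by tracing loops through Proposition~\ref{H1=Hompi}. One small slip: in the paper $c_i$ is defined to \emph{crush} the $i$-th summand, not to project onto it, so your parenthetical ``projection onto the $i$-th summand'' is inverted; this does not affect the logic, since what you actually need---and what Lemma~\ref{Sai=ciwp} provides---is only that $c_i$ is matched with $a_i$, and the paper's labeling of the generators $x,y$ in $\pi'$ is chosen precisely so that $(\Sigma c_i)^*(z_n^*)$ lands on the correct class.
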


\begin{proof}

By an abuse of notation, let $\wp$ also denote the composite morphism $\Sigma (\G_m \vee \G_m) \to \Sigma (\proj_k^1 - \{0,1,\infty \}) \to S (\proj_k^1 - \{0,1,\infty \})$ in $\ho_{\Aone} \Spaces$, and identify $\rH^2(S (\proj_{\kbar}^1 - \{0,1,\infty \}),\Z/n)$ with $\rH^2(\Sigma (\proj_{\kbar}^1 - \{0,1,\infty \}),\Z/n)$ and $\rH^2(S \G_m,\Z/n)$ with $\rH^2(\Sigma \G_m,\Z/n)$ by the isomorphisms in the statement of Lemma \ref{Sai=ciwp}. Let $\Sigma a_i$ denote the composition of $S a_i$ with the canonical map $S \G_m \to \Sigma \G_m$.

Then Lemma \ref{Sai=ciwp} says that $\Sigma a_i  \circ \wp = \Sigma c_i$. The dual to the counterclockwise loop based at $1$ in $\G_m(\C)$ determines a canonical element $z_n^*$ of $\rH^2(\Sigma \G_{m,\kbar}, \Z/n)$ by the comparison between the \'etale and topological fundamental groups \cite[XII Corollaire 5.2]{sga1}, Proposition  \ref{Hi+1Sigma=Hi}, and Proposition \ref{H1=Hompi}.  By the construction of $x_n^*$ and $y_n^*$, we have that $(\Sigma a_1)^*(z_n^*) = x_n^* $, $(\Sigma a_2)^*(z_n^*) = y_n^*$, $(\Sigma c_1)^*(z_n^*) = x_n^*$ and $(\Sigma c_2)^*(z_n^*) = y_n^*$. This shows the proposition.
\end{proof}

\section{Desuspending $\Sigma (\proj_k^1 - \{0,1,\infty \})$}\label{Sec_desuspending}

We use the Galois action on $\pi_1^{\et}(\proj_{\kbar}^1 - \{0,1,\infty \})$ to show that $\proj_k^1 - \{0,1,\infty \}$ and $\G_{m,k} \vee \G_{m,k}$ are distinct desuspensions of $\Sigma (\proj_k^1 - \{0,1,\infty \})$.  Recall the definition of $\pi'$ from Example \ref{LEtGmkveeGmk}. Here are the needed facts about the Galois action on $\pi = \pi_1^{\et}(\proj_{\overline{k}}^1 - \{0,1,\infty \}, \overline{01})$.

An element $g \in G$ acts on $\pi$ by \begin{equation}\label{G-action_pi} g(x)  =  x^{\chi(g)} \quad \quad g(y)  =  \frak{f}(g)^{-1}y^{\chi(g)}\frak{f}(g)\end{equation} where $\mathfrak{f}: G \rightarrow [\pi]_2$ is a cocycle with values in  the commutator subgroup $[\pi]_2$ of $\pi$. See \cite[Proposition 1.6]{Ihara_GT}.  Since $\overline{01}$ is a rational tangential base-point, $\overline{01}$ splits the homomorphism $\pi_1^{\et}(\proj_{k}^1 - \{0,1,\infty \}, \overline{01}) \to \pi_1^{\et} \Spec k \cong G$, giving an isomorphism $\pi_1^{\et}(\proj_{k}^1 - \{0,1,\infty \}, \overline{01}) \cong  \pi \rtimes G$.

Let $\pi = [\pi]_1 \supseteq [\pi]_2  \supseteq [\pi]_3 \supseteq \ldots$ denote the lower central series of $\pi$, so $[\pi]_n$ is the closure of the subgroup generated by commutators of elements of $\pi$ with elements of $[\pi]_{n-1}$.  Use the analogous notation for the lower central series of any profinite group.

Let $\iota: \pi' \to \pi$ be the homomorphism of groups $\iota(x) = x$ and $\iota(y) = y$, and let $\iota^{\ab}: (\pi')^{\ab} \to \pi^{\ab}$ denote the induced map on abelianizations. Note that $\iota^{\ab}$ is $G$-equivariant. 

\begin{lm}\label{no_G-equivariant_extnsions_iota_ab}
Let $k$ be a number field not containing the square root of $2$. Then there is no continuous homomorphism $\pi' \times G \to \pi \rtimes G$ over $G$ inducing $\iota^{\ab} \rtimes 1_G$ after abelianization.
\end{lm}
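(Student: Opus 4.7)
The plan is to argue by contradiction. Suppose $\phi: \pi' \rtimes G \to \pi \rtimes G$ is a continuous homomorphism over $G$ inducing $\iota^{\ab} \rtimes 1_G$ on abelianizations. Unpacking the data, $\phi$ corresponds to a pair $(\psi, h)$: a continuous group homomorphism $\psi: \pi' \to \pi$ (where $\phi(w,1) = (\psi(w),1)$) and a continuous $1$-cocycle $h: G \to \pi$ for the twisted Galois action \eqref{G-action_pi} on $\pi$ (where $\phi(1,g) = (h(g),g)$), subject to the twisted-equivariance identity
\begin{equation*}
\psi\bigl(g(w)\bigr) \;=\; h(g)\cdot g\bigl(\psi(w)\bigr)\cdot h(g)^{-1}
\qquad \text{for all }g\in G,\ w\in\pi'.
\end{equation*}
The abelianization hypothesis forces $\psi(x)\equiv x$ and $\psi(y)\equiv y$ modulo $[\pi]_2$.

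I would then reduce modulo the lower central series of $\pi$. In $\pi/[\pi]_3$ the subgroup $[\pi]_2/[\pi]_3$ is central, so conjugation by $\mathfrak{f}(g)$ in \eqref{G-action_pi} is trivial and the $G$-action on $\pi/[\pi]_3$ coincides with the untwisted action on $\pi'/[\pi']_3$; hence the equivariance condition is automatically satisfiable modulo $[\pi]_3$, and the obstruction lives one level deeper. I would next pass to $\pi/[\pi]_4$. Writing $\mathfrak{f}(g) \equiv [x,y]^{a(g)} \pmod{[\pi]_3}$ for a continuous $1$-cocycle $a: G \to \hatZ(2)$, a direct commutator computation in the $3$-step nilpotent quotient yields
\begin{equation*}
g(y) \;\equiv\; y^{\chi(g)}\cdot [y,[x,y]]^{-\chi(g)\,a(g)} \pmod{[\pi]_4}.
\end{equation*}
Applying the twisted-equivariance condition with $w=y$, and comparing with the untwisted action on $\pi'/[\pi']_4$, the discrepancy lands in $[\pi]_3/[\pi]_4 \cong \hatZ(3)^{\oplus 2}$ and precisely forces the class of $a$ in $H^1(G,\hatZ(2))$ to be a coboundary.

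The final ingredient is arithmetic. By the classical computations of Anderson, Coleman, Ihara, and collaborators \cite{Anderson_hyperadelic_gamma,Coleman_Gauss_sum,Ihara_Braids_Gal_grps,IKY}, the mod-$2$ reduction of $a$ is identified (up to controlled cyclotomic factors) with the Kummer character $\kappa_{\sqrt 2}: G \to \Z/2$, which is nontrivial in $H^1(G,\Z/2(1))$ exactly when $k$ does not contain $\sqrt 2$. Hence the class of $a$ cannot be a coboundary, producing the desired contradiction. The main obstacle I anticipate is the careful commutator bookkeeping in $\pi/[\pi]_4$ needed to isolate the piece of the obstruction realized by $a$, together with the correct invocation of the Anderson--Ihara formula identifying this piece with the Kummer class of $\sqrt 2$ rather than some other Kummer-theoretic invariant.
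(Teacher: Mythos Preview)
Your overall strategy is sound and is in fact more elementary than the paper's: the paper packages the level-$3$ obstruction as a difference of extension classes $\varphi_3' - (\overline{\theta}_3)^{*}\varphi_3 \in \rH^2(\pi'/[\pi']_3 \rtimes G,\hatZ(3)^{\oplus 2})$, expresses each summand as an explicit cup product, and then appeals \emph{twice} to the non-degeneracy of the cup product on $\rH^1(G,\Z/2)$---once to eliminate an auxiliary cocycle, once to force the Ihara class to vanish. Your direct commutator calculus on the generators would bypass the $\rH^2$ formalism and cup-product non-degeneracy entirely, reaching the same contradiction from the Anderson--Coleman--Ihara identification of $a$ mod $2$ with the Kummer class of $2$.

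There is, however, a genuine gap. The cocycle $h$ contributes a hidden parameter: the abelianization hypothesis forces $h(g)\in[\pi]_2$ but not $h(g)\in[\pi]_3$, so $h(g)\equiv [x,y]^{c(g)}\pmod{[\pi]_3}$ for some $1$-cocycle $c:G\to\hatZ(2)$, and conjugation by $h(g)$ inserts extra factors $[[x,y],x]^{c(g)\chi(g)}$ and $[[x,y],y]^{c(g)\chi(g)}$ into the equivariance identity modulo $[\pi]_4$. Imposing equivariance only for $w=y$ yields, on the $[[x,y],y]$-component, merely that $c-a$ is a coboundary, i.e.\ $[c]=[a]$ in $\rH^1(G,\hatZ(2))$; since $c$ is part of the unknown data of $\phi$, this is no constraint on $a$ at all. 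You must also impose the equivariance for $w=x$: its $[[x,y],x]$-component reads $c(g)=u\,(1-\chi(g)^2)$ for a constant $u\in\hatZ$ determined by $\psi(x)$ modulo $[\pi]_4$, which is precisely the statement $[c]=0$. Only with both generators do you obtain $[a]=0$ and hence the contradiction. The paper carries out exactly this two-step elimination in its cohomological language: the $[[x,y],x]$-summand of the obstruction is used to kill $\overline{c}$, and then the $[[x,y],y]$-summand gives $\overline{f}\cup(-)=0$, whence $\overline{f}=0$.
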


\begin{proof}
Suppose to the contrary that $\theta$ is such a map. Because the subgroups of the lower central series are characteristic, $\theta$ induces a commutative diagram \begin{equation}\label{n_n+1_LCS_theta}\xymatrix{1 \ar[r] & [\pi]_{n}/ [\pi]_{n+1} \ar[r] &  \pi/ [\pi]_{n+1} \rtimes G \ar[r] & \pi/ [\pi]_{n} \rtimes G \ar[r] & 1\\ 1 \ar[r] & [\pi']_{n}/ [\pi']_{n+1} \ar[u]^{\overline{\theta}_n^{n+1}} \ar[r] &  \pi'/ [\pi']_{n+1}\rtimes G \ar[u]^{\overline{\theta}_{n+1}} \ar[r] & \pi'/ [\pi']_{n}\rtimes G \ar[u]^{\overline{\theta}_{n}} \ar[r] & 1}.\end{equation} Thus if $\overline{\theta}_n^{n+1}$ and $\overline{\theta}_{n}$ are isomorphisms, so is $\overline{\theta}_{n+1}$. Since $\pi'$ and $\pi$ are isomorphic to the profinite completion of the free group on two generators, $[\pi']_n/[\pi']_{n+1}$ and  $[\pi]_n/[\pi]_{n+1}$ are isomorphic to the degree $n$ graded component of the free Lie algebra on the same generators over $\hatZ$.  Since $\overline{\theta}_{2} = \iota^{\ab} \rtimes 1_G$ is an isomorphism, it follows that $\overline{\theta}_n^{n+1}$ is an isomorphism. By induction, it follows that $\overline{\theta}_{n}$ is an isomorphism for all $n$. 

The extension $$  1 \to [\pi]_{n}/ [\pi]_{n+1} \to  \pi/ [\pi]_{n+1} \rtimes G \to \pi/ [\pi]_{n} \rtimes G \to 1$$ is classified by the element of $\rH^2(\pi/ [\pi]_{n}  \rtimes G,[\pi]_{n}/ [\pi]_{n+1})$ represented by the inhomogeneous cocycle $\varphi_{n}$ $$
\varphi_{n}(\gamma \rtimes g, \eta \rtimes h) =  s(\gamma) g s(\eta) s(\gamma g \eta)^{-1}
$$ where $s: \pi/ [\pi]_{n} \to \pi/ [\pi]_{n+1}$ is a continuous set-theoretic section of the quotient map $\pi/ [\pi]_{n+1} \to \pi/ [\pi]_{n}$.  See for example \cite[IV 3]{Brown_coh_groups}. Let $\varphi_{n}'$ denote the analogous inhomogeneous cocycle obtained by replacing $\pi$ with $\pi'$.

This association of a class in $\rH^2(\pi/[\pi]_n \rtimes G, [\pi]_n/[\pi]_{n+1})$ to an extension of $\pi/[\pi]_n \times G$ by $[\pi]_n/[\pi]_{n+1}$  induces a bijection between $\rH^2(\pi/[\pi]_n \rtimes G, [\pi]_n/[\pi]_{n+1})$ and isomorphism classes of extensions \cite[IV Theorem 3.12]{Brown_coh_groups}. Since $\overline{\theta}_{n}$ is an isomorphism, it follows that $\varphi_{n}'$ and $(\overline{\theta}_{n})^* \varphi_{n}$ represent the same class in  $\rH^2(\pi/[\pi]_n \rtimes G, [\pi]_n/[\pi]_{n+1})$. 

By \eqref{G-action_pi} and \eqref{G-action_pi'}, $$\pi/[\pi]_{2} \cong \hatZ(1)x \oplus \hatZ(1) y$$ $$[\pi]_2/[\pi]_{3} \cong \hatZ(2) [x,y]$$ \begin{equation}\label{pi34}[\pi]_3/[\pi]_{4} \cong \hatZ(3) [[x,y],x] \oplus \hatZ(3) [[x,y],y],\end{equation} and the same isomorphisms hold with $\pi'$ replacing $\pi$.

We claim that $\overline{\theta}_{3}$ is given by \begin{align}\label{theta3}\overline{\theta}_{3}(x \rtimes 1) = x \rtimes 1 \qquad \overline{\theta}_{3}(y \rtimes 1) = y \rtimes 1\qquad \overline{\theta}_{3}(1\rtimes g) = [x,y]^{c(g)}\rtimes g\end{align} for all $g \in G$, where $$c: G \to \hatZ(2) $$ is a coycle. To see this, note that the hypothesis on $\overline{\theta}_{2}$ implies that $\overline{\theta}_{3} (x \rtimes 1) = x  [x,y]^{c_1(g)} \rtimes 1$ with $c_1(g)$ in $\hatZ$. Similarly, $\overline{\theta}_{3} (1 \rtimes g) =  [x,y]^{c(g)} \rtimes g$ with $c(g)$ in $\hatZ$. Since $\theta$ is a homomorphism, we have  $\overline{\theta}_{3}(g x) = \overline{\theta}_{3}(g) \overline{\theta}_{3}(x)$. Since $g x = x^{\chi(g)} \rtimes g$ and $[x,y]$ is in the center, we have $$\overline{\theta}_{3}(g x) = \overline{\theta}_{3}(x)^{\chi(g)}\ott(g) = x^{\chi(g)}  [x,y]^{c_1(g) \chi(g)+c(g)} \rtimes g.$$ On the other hand, $$\ott(g) \ott(x) = ([x,y]^{c(g)} \rtimes g) (x [x,y]^{c_1(g)} \rtimes 1) = x^{\chi(g)}  [x,y]^{c_1(g) \chi(g)^2+c(g)} \rtimes g.$$ Thus $c_1(g) \chi(g)^2+c(g) = c_1(g) \chi(g)+c(g)$ for all $g$ in $G$. It follows that $c_1(g) = 0$.  Since $\frak{f}(g)$ is in $[\pi]_2$ and elements of $[\pi]_2$ are all in the center of $\pi/[\pi]_3$, switching $x$ and $y$ induces an isomorphism on $\pi/[\pi]_3$. The same argument therefore implies that $\ott(y) = y$. Since $\ott$ is a homomorphism when restricted to $1 \rtimes G$, it follows that $c$ is a cocycle, showing \eqref{theta3}.

By \eqref{pi34}, we have a direct sum decomposition $$\rH^2(\pi/[\pi]_3 \rtimes G, [\pi]_{3}/[\pi]_4) \cong \rH^2(\pi/[\pi]_3 \rtimes G, \hatZ(3))[[x,y],x] \oplus \rH^2(\pi/[\pi]_3 \rtimes G, \hatZ(3) ) [[x,y],y].$$ Define $\varphi_{3, [[x,y],x]}$ and $\varphi_{3,[[x,y],y]}$ so that under this isomorphism $\varphi_ 3$ decomposes as $\varphi_3 = \varphi_{3, [[x,y],x]} \oplus\varphi_{3,[[x,y],y]}$. It was calculated in \cite{PIA} that $\varphi_{3,[[x,y],x]}$ is represented by the cocycle maping  $(y^{a_1} x^{b_1} [x,y]^{c_1} \rtimes g_1, y^{a_2} x^{b_2} [x,y]^{c_2} \rtimes g_2) $ to
\begin{align*}
c_1\chi(g_1) b_2 +  {b_1 + 1\choose 2} \chi(g_1) a_2 + b_1 \chi(g_1)^2 a_2 b_2 -  \frac{\chi(g_1) -1}{2} \chi(g_1)^2 c_2 
 \end{align*} and that $\varphi_{3,[[x,y],y]}$ is represented by the cocycle maping  $(y^{a_1} x^{b_1} [x,y]^{c_1} \rtimes g_1, y^{a_2} x^{b_2} [x,y]^{c_2} \rtimes g_2) $ to \begin{align*}
 c_1\chi(g_1) a_2 + b_1 {\chi(g_1)a_2 + 1\choose 2} -  \chi(g_1){\chi(g_1) \choose 2} c_2 - f(g_1) \chi(g_1)a_2
 \end{align*} where $f: G \to \hatZ(2)$ is such that $\frak{f}(g) = [x,y]^{f(g)} $ in $\pi/[\pi]_3$.
 
We may similarly decompose $\varphi_3'$ as $\varphi'_3 = \varphi'_{3, [[x,y],x]} \oplus \varphi'_{3,[[x,y],y]}$. By the above calculation of $\ott$, and the expressions \eqref{G-action_pi} and \eqref{G-action_pi'} for the $G$-action on $\pi$ and $\pi'$, we have that $\varphi'_{3, [[x,y],x]} $ and   $\varphi'_{3,[[x,y],y]}$ are obtained from the expressions for $ \varphi_{3, [[x,y],x]} $ and $\varphi_{3,[[x,y],y]}$ by setting $f=0$. 
 
It follows that $\varphi_3' - (\ott)^* \varphi_3$ is represented by the direct sum of two cocycles, given by sending $(y^{a_1} x^{b_1} [x,y]^{c_1} \rtimes g_1, y^{a_2} x^{b_2} [x,y]^{c_2} \rtimes g_2)$ to  $$ (-c(g_1) \chi(g_1) b_2  +\frac{\chi(g_1) -1}{2} \chi(g_1)^2 c(g_2) )[[x,y],x]$$ and  $$ (-c(g_1)\chi(g_1) a_2 +   \chi(g_1){\chi(g_1) \choose 2} c(g_2) + f(g_1) \chi(g_1)a_2)[[x,y],y]$$ respectively. 

Using the above direct sum decomposition of $\rH^2(\pi/[\pi]_3 \rtimes G, [\pi]_3/[\pi]_4)$, this implies that $$\varphi_{3,[[x,y],x]}' - (\ott)^* \varphi_{3,[[x,y],x]} = -c \cup b + \frac{\chi(g) -1}{2} \cup c$$ and

$$\varphi_{3,[[x,y],y]}' - (\ott)^* \varphi_{3,[[x,y],y]} = -c \cup a + \frac{\chi(g) -1}{2} \cup c + f \cup a, $$ where these equalities are in $ \rH^2(\pi'/[\pi']_3 \rtimes G,\hatZ(3)),$ and where $f: G \to \hatZ(2)$ is considered via pullback as an element of $\rH^1(\pi'/[\pi']_3 \rtimes G, \hatZ(2))$, $a: \pi'/[\pi']_3 \rtimes G \to \hatZ(1)$ is the cocyle $y^{a} x^{b} [x,y]^{c} \rtimes g \mapsto a$, $b$ is defined similarly, and $\frac{\chi(g) -1}{2}$ is the cocycle $g \mapsto \frac{\chi(g) -1}{2}$ taking values in $\hatZ(1)$ pulled back to $\pi'/[\pi']_3 \rtimes G$.

As shown above, the existence of $\theta$ therefore implies that $-c \cup b + \frac{\chi(g) -1}{2} \cup c = 0$ and $ -c \cup a + \frac{\chi(g) -1}{2} \cup c + f \cup a = 0$ in $\rH^2(\pi'/[\pi']_3 \rtimes G,\hatZ(3))$. 

Consider first the equality $-c \cup b + \frac{\chi(g) -1}{2} \cup c = 0$. Since the cup product is graded-commutative, we may rewrite this equality as $(b  + \frac{\chi(g) -1}{2}) \cup c  = 0 $. The quotient map $\hatZ(3) \to \Z/2$ determines map $\rH^2(\pi'/[\pi']_3 \rtimes G, \hatZ(3)) \to \rH^2(\pi'/[\pi']_3 \rtimes G, \Z/2)$. Passing to the image under this map, we have an equality $\overline{(b  + \frac{\chi(g) -1}{2})} \cup \overline{c} = 0$, where $\overline{c}$ denotes the image of $c$ and $\overline{(b  + \frac{\chi(g) -1}{2})}$ denotes the image $b  + \frac{\chi(g) -1}{2}$.  Recall that for any $\beta \in k^*$ with a chosen compatible system of $n$th roots of $\beta$, there is a Kummer cocycle $\kappa(\beta): G \to \Zhat(1) \cong \varprojlim_n \mu_{n}(\kbar)$ defined by $g \sqrt[n]{\beta} = {\kappa(b)(g)}_n \sqrt[n]{\beta} $ where ${\kappa(b)(g)}_n$ is the element of $\mu_n(\kbar)$ determined by $\kappa(b)(g)$. We may define a homomorphism $G \to \pi'/[\pi']_3 \rtimes G$ by $g \mapsto y^{\kappa(\beta)}\rtimes g$. Pulling back the equality $\overline{(b  + \frac{\chi(g) -1}{2})} \cup \overline{c} = 0$ by this homomorphism gives the equality $(\kappa(b) + \overline{\frac{\chi(g) -1}{2}}) \cup \overline{c} = 0$ in $\rH^2(G, \Z/2) $ because $c$ and $\frac{\chi(g) -1}{2}$ are pulled back from $G$. Note that any element of $\rH^1(G, \Z/2)$ is of the form $(\kappa(b) + \frac{\chi(g) -1}{2})$ for an appropriate choice of $\beta$. By the non-degeneracy of the cup product $\rH^1(G, \Z/2) \otimes \rH^1(G, \Z/2) \to \rH^2(G, \Z/2)$, it follows that $\overline{c} \in \rH^2(G, \Z/2)$ is zero.

Consider now the second equality $-c \cup a + \frac{\chi(g) -1}{2} \cup c + f \cup a = 0$ in $\rH^2(\pi'/[\pi']_3 \rtimes G,\hatZ(3))$, and again pass to the image under $\rH^2(\pi'/[\pi']_3 \rtimes G, \hatZ(3)) \to \rH^2(\pi'/[\pi']_3 \rtimes G, \Z/2)$. Since $\overline{c} = 0$ in $\rH^2(G, \Z/2)$, we have $ \overline{f} \cup \overline{a} = 0$ in $\rH^2(\pi'/[\pi']_3 \rtimes G,\hatZ(3))$. On the other hand, $f: G \to \hatZ(2)$ is known by work of Ihara \cite[6.3 Thm~p.115]{Ihara_Braids_Gal_grps} \cite{IKY}, Anderson  \cite{Anderson_hyperadelic_gamma}, and Coleman \cite{Coleman_Gauss_sum}, and we can show that this is inconsistent with $\overline{f} \cup \overline{a} =0$ in the following manner. Namely, $f(g) = \frac{1}{24}(\chi(g)^2 -1)$. See \cite[12.5.2]{PIA}. By \cite[Lemma 31]{PIA}, the image of $f$ under the map $\rH^1(G, \hatZ(2)) \to \rH^1(G, \Z/2) \cong k^*/(k^*)^2$ is represented by $2 \in k^*$. For any $\alpha \in k^*$ we may choose a compatible system of $n$th roots of $\alpha$ and define a homomorphism $G \to \pi'/[\pi']_3 \rtimes G$ by $g \mapsto y^{\kappa(\alpha)}\rtimes g$. Pulling back $\overline{f} \cup \overline{a}$ by this homomorphism gives $\overline{f} \cup \kappa(\alpha) \in \rH^2(G, \Z/2)$. Thus $\kappa(2) \cup \kappa(\alpha) = 0 \in \rH^2(G, \Z/2)$ for all $\alpha \in k^*$. Since $k$ does not contain the square root of $2$, this contradicts the nondegeneracy of the the cup product, giving the desired contradiction.
\end{proof}

\begin{tm}\label{no_desuspension}
Let $k$ be a finite extension of $\Q$ not containing the square root of $2$. There is no morphism $\rho: \G_{m,k} \vee \G_{m,k} \to \proj_{k}^1 - \{0,1,\infty \}$ in $\ho_{\Aone} \Spaces$ such that $\Sigma \rho =\wp$ in $\ho_{\Aone} \Spaces$.
\end{tm}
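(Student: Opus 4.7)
The plan is to apply the functor $\pi_1$ of Proposition~\ref{etpi_on_spaces} to a hypothetical $\rho$ and contradict Lemma~\ref{no_G-equivariant_extnsions_iota_ab}. Suppose $\rho$ exists. Equip $\proj_k^1 - \{0,1,\infty\}$ with the rational tangential basepoint $\overline{01}$ and $\G_{m,k} \vee \G_{m,k}$ with the wedge basepoint $\ast$ coming from the rational point $1$; both objects lie in $\ho_{\Aone} \Spaces^+_c$. Without asking $\rho$ to respect basepoints, it nonetheless defines a morphism in $\ho_{\Aone} \Spaces^+_c$, and applying $\pi_1$ yields an outer homomorphism in $\pGout$ from $\pi_1(\G_{m,k} \vee \G_{m,k}, \ast)$ to $\pi_1(\proj_k^1 - \{0,1,\infty\}, \overline{01})$. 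The rational tangential basepoint splits the projection to $G$, giving an identification of the target with $\pro\text{-}(\pi \rtimes G)^{\wedge}$; by Example~\ref{LEtGmkveeGmk} and Lemma~\ref{From_pi'}, the outer homomorphism factors through $\pro\text{-}(\pi' \rtimes G)^{\wedge}$.

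Next I would produce a strict, rather than outer, homomorphism $\theta: \pi' \rtimes G \to \pi \rtimes G$ over $G$. Choose any representative of the outer homomorphism; its composition with the projection to $G$ agrees with the given projection up to an inner automorphism of $G$ by some $h \in G$. Postcomposing with conjugation by $1 \rtimes h^{-1}$ in $\pi \rtimes G$ corrects this and delivers $\theta$ over $G$ as required.

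The key step is to verify that $\theta$ induces $\iota^{\ab} \rtimes 1_G$ on abelianizations, so that Lemma~\ref{no_G-equivariant_extnsions_iota_ab} applies. The quotient to $G$ gives identity on the $G$-factor since $\theta$ is over $G$. On the $\pi'^{\ab} \to \pi^{\ab}$ factor I use the chain of natural identifications after base change to $\kbar$: Proposition~\ref{H1=Hompi} identifies $\rH^1(X_{\kbar}, \Z/n)$ with $\Hom(\pi_1(X_{\kbar}), \Z/n)$; Proposition~\ref{Hi+1Sigma=Hi} identifies this with $\rH^2(\Sigma X_{\kbar}, \Z/n)$, under which $\rH^1(\rho_{\kbar}, \Z/n)$ becomes $\rH^2(\wp_{\kbar}, \Z/n)$; and Proposition~\ref{H2wp} says the latter sends $x_n^* \mapsto x_n^*$ and $y_n^* \mapsto y_n^*$. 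Dualizing and passing to the limit over $n$ shows that the abelianization of $\theta$ sends the generators $x, y$ of $\pi'^{\ab}$ to $x, y$ in $\pi^{\ab}$, which is precisely $\iota^{\ab}$. (Inner automorphisms act trivially on abelianizations, so there is no ambiguity from the outer-homomorphism passage.)

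The main obstacle I anticipate is the careful basepoint and outer-versus-strict bookkeeping in the first two paragraphs: making sure the tangential basepoint gives the strict splitting $\pi_1^{\et}(\proj_k^1 - \{0,1,\infty\}, \overline{01}) \cong \pi \rtimes G$, and that after adjusting by an inner automorphism one really obtains a homomorphism over $G$ as demanded by the statement of Lemma~\ref{no_G-equivariant_extnsions_iota_ab}. Once $\theta$ and its abelianization are in hand, the contradiction with that lemma is immediate.
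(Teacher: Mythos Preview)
Your overall strategy matches the paper's: apply $\pi_1$ to $\rho$, use Lemma~\ref{From_pi'} to obtain a homomorphism $\theta:\pi'\rtimes G\to\pi\rtimes G$ over $G$, identify its effect on abelianizations via Propositions~\ref{Hi+1Sigma=Hi}, \ref{H1=Hompi}, and \ref{H2wp}, and contradict Lemma~\ref{no_G-equivariant_extnsions_iota_ab}. However, there is a genuine gap in the ``key step.''

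Knowing that $\theta$ is over $G$ and that $\theta|_{\pi'}$ abelianizes to $\iota^{\ab}$ does \emph{not} imply that the induced map $\pi'/[\pi']_2\rtimes G\to\pi/[\pi]_2\rtimes G$ is $\iota^{\ab}\rtimes 1_G$. Being over $G$ only says $\theta(1\rtimes g)=(\psi(g))\rtimes g$ for some cocycle $\psi:G\to\pi$; on the level of $\pi/[\pi]_2\rtimes G$ this reads $\theta^{\ab}(y^a x^b\rtimes g)=y^{a+\alpha(g)}x^{b+\beta(g)}\rtimes g$ for cocycles $\alpha,\beta:G\to\hatZ(1)$, and you have given no argument that $\alpha,\beta$ vanish. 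Your parenthetical that ``inner automorphisms act trivially on abelianizations'' is misleading here: inner automorphisms of $\pi\rtimes G$ act trivially on $(\pi\rtimes G)^{\ab}$, but they do \emph{not} act trivially on $\pi/[\pi]_2\rtimes G$, which is the relevant quotient. Conjugation by an element of $\pi$ shifts $(\alpha,\beta)$ by a coboundary, so the real task is to show $\alpha$ and $\beta$ are cohomologically trivial in $\rH^1(G,\hatZ(1))$.

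The paper fills this gap with an extra argument you are missing: since $\theta$ also induces an isomorphism at the next stage $\pi'/[\pi']_3\rtimes G\to\pi/[\pi]_3\rtimes G$, the extension classes $\varphi_2'$ and $(\theta^{\ab})^*\varphi_2$ must agree in $\rH^2(\pi'/[\pi']_2\rtimes G,\hatZ(2))$. Using the identification $\varphi_2=b\cup a$ this becomes $(b+\beta)\cup(a+\alpha)=b\cup a$, and nondegeneracy of the cup product (over a number field) forces $\alpha$ and $\beta$ to be trivial in cohomology. Only after this adjustment does $\theta^{\ab}=\iota^{\ab}\rtimes 1_G$ hold, and only then does Lemma~\ref{no_G-equivariant_extnsions_iota_ab} apply.
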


\begin{proof}
Suppose to the contrary that we have such a morphism $\rho$. The geometric point $\overline{01}$ of $\proj_{k}^1 - \{0,1,\infty \}$ and the extension of the $k$-basepoint of $\G_{m,k} \vee \G_{m,k}$ to a geometric point allow us to consider $(\proj_{k}^1 - \{0,1,\infty \}, \overline{01})$ and $(\G_{m,k} \vee \G_{m,k}, \ast)$ as objects of $\Spaces^+$. Since $\proj_{k}^1 - \{0,1,\infty \}$ and $\G_{m,k} \vee \G_{m,k}$ have connected \'etale homotopy type, $\rho$ is a morphism in $\Spaces^+_c$. Thus $\rho$ induces an outer continuous homomorphism $\rho_*: \pi' \rtimes G \to \pi \rtimes G$ by   Proposition \ref{etpi_on_spaces}, Lemma \ref{From_pi'}, and taking the inverse limit. We may choose a continuous homomorphism over $G$ representing $\rho_*$. By a slight abuse of notation, we call this representative $\rho_*$ as well.

Let $(\rho_{\kbar})_{\ast}$ denote the induced map $\pi' \to \pi$. It follows from Proposition \ref{H1=Hompi} that the induced map $\rho^*: \rH^1(\proj_{\kbar}^1 - \{0,1,\infty \} ,\Z/n) \to \rH^1(\G_{m,\kbar} \vee \G_{m,\kbar},\Z/n) $ is computed $\rho^* = \Hom((\rho_{\kbar})_{\ast},\Z/n)$. By Proposition \ref{Hi+1Sigma=Hi}, $\rH^2(\wp_{\kbar}) = \rH^1(\rho_{\kbar})$. Combining the two previous, we have $\rH^2(\wp_{\kbar}) = \Hom((\rho_{\kbar})_{\ast},\Z/n)$. By Proposition \ref{H2wp}, it follows that $\Hom(\rho_*,\Z/n)(x_n^*) = x_n^*$ and $\Hom(\rho_*,\Z/n)(y_n^*) = y_n^*$. Since $n$ is arbitrary, it follows that  $(\rho_{\kbar})_{\ast}^{\ab} = \iota^{\ab}$. 

We claim that after modifying $\rho_{\ast}$ by an inner automorphism, the map $\rho_{\ast}^{\ab}: \pi'/[\pi']_2 \rtimes G \to  \pi/[\pi]_2 \rtimes G $ induced by $\rho_{\ast}$ is $\iota^{\ab} \rtimes 1_G$. Note the commutative diagram $$\xymatrix{ 1 \ar[r] &  \pi/[\pi]_2 \ar[r] &  \pi/[\pi]_2 \rtimes G  \ar[r] & G \ar[r] & 1 \\ 1 \ar[r] &  \pi'/[\pi']_2 \ar[u]^{(\rho_{\kbar})_{\ast}^{\ab}} \ar[r] &  \pi'/[\pi']_2 \rtimes G \ar[u]^{\rho_{\ast}^{\ab}}  \ar[r] & G \ar[r] \ar[u]^{1_G} & 1.}$$ Since $1_G$ and $(\rho_{\kbar})_{\ast}^{\ab}$ are isomorphisms, so is $\rho_{\ast}^{\ab}$. It follows by induction that $\overline{(\rho_{\ast})}_n: \pi'/[\pi']_n \rtimes G \to \pi/[\pi]_n \rtimes G$ is an isomorphism, cf. \eqref{n_n+1_LCS_theta}. 

Let $\varphi_2 \in \rH^2(\pi/[\pi]_2 \rtimes G, [\pi]_2/[\pi]_3)$ be the element classifying $$1 \to [\pi]_2/[\pi]_3 \to \pi/[\pi]_3 \rtimes G \to \pi/[\pi]_2 \rtimes G \to 1,$$ and define $\varphi_2'$ by replacing $\pi$ with $\pi'$ in the definition of $\varphi_2$. Since  $\overline{(\rho_{\ast})}_3$ is an isomorphism, $\overline{(\rho_{\ast})}_2^* \varphi_2 = \varphi_2'$. By \cite[Proposition 7]{PIA}, $\varphi_2 = b \cup a$, where $b: \pi/[\pi]_2 \rtimes G \to \hatZ(1)$ is the cocycle $y^a x^b \rtimes g \mapsto b$ and $a: \pi/[\pi]_2 \rtimes G \to \hatZ(1)$ is the cocycle $y^a x^b \rtimes g \mapsto a$. Since conjugation by $\mathfrak{f}(g)$ is trivial in $\pi/[\pi]_3$, it follows that $\varphi_2' = b \cup a$, where $a$ and $b$ are defined by replacing $\pi'$ with $\pi$ in the previous.  Because $(\rho_{\kbar})_{\ast}^{\ab} = \iota^{\ab}$, we have $\overline{(\rho_{\ast})}_2 (y^a x^b \rtimes g) = y^{a + \alpha(g)} x^{b + \beta(g)} \rtimes g$ where $\alpha,\beta: G \to \hatZ(1)$ are cocycles. Thus $\overline{(\rho_{\ast})}_2^* \varphi_2 = (b + \beta) \cup (a + \alpha)$. Thus $(b + \beta) \cup (a + \alpha) = b \cup a$. Since the cup product is non-degenerate, it  follows that $\beta$ and $\alpha$ are trivial in cohomology. Thus after modifying $\rho$ by an inner automorphism, we may assume $\rho_{\ast}^{\ab}= \iota^{\ab} \rtimes 1_G$. This contradicts Lemma \ref{no_G-equivariant_extnsions_iota_ab}. 
\end{proof}

} % end of parskip; it started just before the introduction

\bibliographystyle{SP1mD}

%the bibliographystyle was modified by Joe from amsalpha. If I change bibliographystyle,  "thesis" bibliography will still compile, but without the overides to cite EGA.

\bibliography{SP1minus_desuspension}

%LaTeX BibTeX LaTeX  LaTeX 
%\cite[pg 27] {Ihara_GT}

\end{document}